\renewcommand{\[}{\begin{equation}}
\renewcommand{\]}{\end{equation}}
\begin{document}
\baselineskip15pt
\newcommand{\nc}[2]{\newcommand{#1}{#2}}
\newcommand{\rnc}[2]{\renewcommand{#1}{#2}}
\rnc{\theequation}{\thesection.\arabic{equation}}
\def\note#1{{}}

\newtheorem{definition}{Definition $\!\!$}[section]
\newtheorem{proposition}[definition]{Proposition $\!\!$}
\newtheorem{lemma}[definition]{Lemma $\!\!$}
\newtheorem{corollary}[definition]{Corollary $\!\!$}
\newtheorem{theorem}[definition]{Theorem $\!\!$}
\newtheorem{example}[definition]{\sc Example $\!\!$}
\newtheorem{remark}[definition]{\sc Remark $\!\!$}

\nc{\beq}{\begin{equation}}
\nc{\eeq}{\end{equation}}
\rnc{\[}{\beq}
\rnc{\]}{\eeq}
\nc{\ba}{\begin{array}}
\nc{\ea}{\end{array}}
\nc{\bea}{\begin{eqnarray}}
\nc{\beas}{\begin{eqnarray*}}
\nc{\eeas}{\end{eqnarray*}}
\nc{\eea}{\end{eqnarray}}
\nc{\be}{\begin{enumerate}}
\nc{\ee}{\end{enumerate}}
\nc{\bd}{\begin{diagram}}
\nc{\ed}{\end{diagram}}
\nc{\bi}{\begin{itemize}}
\nc{\ei}{\end{itemize}}
\nc{\bpr}{\begin{proposition}}
\nc{\bth}{\begin{theorem}}
\nc{\ble}{\begin{lemma}}
\nc{\bco}{\begin{corollary}}
\nc{\bre}{\begin{remark}\rm}
\nc{\bex}{\begin{example}}
\nc{\bde}{\begin{definition}}
\nc{\ede}{\end{definition}}
\nc{\epr}{\end{proposition}}
\nc{\ethe}{\end{theorem}}
\nc{\ele}{\end{lemma}}
\nc{\eco}{\end{corollary}}
\nc{\ere}{\hfill\mbox{$\Diamond$}\end{remark} }
\nc{\eex}{\hfill\mbox{$\Diamond$}\end{example}}
\nc{\bpf}{{\it Proof.~~}}
\nc{\epf}{\hfill\mbox{$\square$}\vspace*{3mm}}
\nc{\hsp}{\hspace*}
\nc{\vsp}{\vspace*}
\newcommand{\wegdamit}[1]{}
\newcommand{\co}{\,\mathrm{co}\,}

\newcommand{\counit}{\varepsilon}
\newcommand{\ls}[1]{\ell(#1)^{\langle 1\rangle}}
\newcommand{\rs}[1]{\ell(#1)^{\langle 2\rangle}}
\nc{\ot}{\otimes}
\nc{\te}{\!\ot\!}
\nc{\bmlp}{\mbox{\boldmath$\left(\right.$}}
\nc{\bmrp}{\mbox{\boldmath$\left.\right)$}}
\nc{\LAblp}{\mbox{\LARGE\boldmath$($}}
\nc{\LAbrp}{\mbox{\LARGE\boldmath$)$}}
\nc{\Lblp}{\mbox{\Large\boldmath$($}}
\nc{\Lbrp}{\mbox{\Large\boldmath$)$}}
\nc{\lblp}{\mbox{\large\boldmath$($}}
\nc{\lbrp}{\mbox{\large\boldmath$)$}}
\nc{\blp}{\mbox{\boldmath$($}}
\nc{\brp}{\mbox{\boldmath$)$}}
\nc{\LAlp}{\mbox{\LARGE $($}}
\nc{\LArp}{\mbox{\LARGE $)$}}
\nc{\Llp}{\mbox{\Large $($}}
\nc{\Lrp}{\mbox{\Large $)$}}
\nc{\llp}{\mbox{\large $($}}
\nc{\lrp}{\mbox{\large $)$}}
\nc{\lbc}{\mbox{\Large\boldmath$,$}}
\nc{\lc}{\mbox{\Large$,$}}
\nc{\Lall}{\mbox{\Large$\forall\;$}}
\nc{\bc}{\mbox{\boldmath$,$}}
\nc{\ra}{\rightarrow}
\nc{\ci}{\circ}
\nc{\cc}{\!\ci\!}
\nc{\lra}{\longrightarrow}
\nc{\imp}{\Rightarrow}
\rnc{\iff}{\Leftrightarrow}
\nc{\inc}{\mbox{$\,\subseteq\;$}}
\rnc{\subset}{\inc}
\def\sw#1{{\sb{(#1)}}}
\nc{\0}{\sb{(0)}}
\newcommand{\Boxneu}{\square}
\def\tr{{\rm tr}}
\def\Tr{{\rm Tr}}
\def\st{\stackrel}
\def\<{\langle}
\def\>{\rangle}
\def\d{\mbox{$\mathop{\mbox{\rm d}}$}}
\def\id{\mathrm{id}}
\def\ker{\mbox{$\mathop{\mbox{\rm Ker$\,$}}$}}
\def\coker{\mbox{$\mathop{\mbox{\rm Coker$\,$}}$}}
\def\hom{\mbox{$\mathop{\mbox{\rm Hom}}$}}
\def\im{\mbox{$\mathop{\mbox{\rm Im}}$}}
\def\map{\mbox{$\mathop{\mbox{\rm Map}}$}}
\def\Cl{\mbox{$\mathop{\mbox{\rm Cl}}$}}
\def\o{\sp{[1]}}
\def\t{\sp{[2]}}
\def\mo{\sp{[-1]}}
\def\z{\sp{[0]}}
\def\lhom#1#2#3{{{}\sb{#1}{\rm Hom}(#2,#3)}}
\def\rhom#1#2#3{{{\rm Hom}\sb{#1}(#2,#3)}}
\def\lend#1#2{{{}\sb{#1}{\rm End}(#2)}}
\def\rend#1#2{{{\rm End}\sb{#1}(#2)}}
\def\Rhom#1#2#3{{{\rm Hom}\sp{#1}(#2,#3)}}
\def\Lhom#1#2#3{{{}\sp{#1}{\rm Hom}(#2,#3)}}
\def\Rrhom#1#2#3#4{{{\rm Hom}\sp{#1}\sb{#2}(#3,#4)}}
\def\Llhom#1#2#3#4{{{}\sp{#1}\sb{#2}{\rm Hom}(#3,#4)}}
\def\LRhom#1#2#3#4#5#6{{{}\sp{#1}\sb{#2}{\rm Hom}\sp{#3}\sb{#4}(#5,#6)}}
\def\khom#1#2{{{\rm Hom}(#1,#2)}}
\def\aten#1{\underset{#1}{\overset{\rm{alg}}{\otimes}}}
\def\mten{\otimes_{\rm{min}}}

\nc{\spp}{\mbox{${\cal S}{\cal P}(P)$}}
\nc{\cP}{\mathcal{P}}
\nc{\ob}{\mbox{$\Omega\sp{1}\! (\! B)$}}
\nc{\op}{\mbox{$\Omega\sp{1}\! (\! P)$}}
\nc{\oa}{\mbox{$\Omega\sp{1}\! (\! A)$}}
\nc{\dr}{\mbox{$\Delta_{R}$}}
\nc{\dsr}{\mbox{$\Delta_{\Omega^1P}$}}
\nc{\ad}{\mbox{$\mathop{\mbox{\rm Ad}}_R$}}
\nc{\as}{\mbox{$A(S^3\sb s)$}}
\nc{\bs}{\mbox{$A(S^2\sb s)$}}
\nc{\slc}{\mbox{$A(SL(2,\C))$}}
\nc{\suq}{\mbox{$\cO(SU_q(2))$}}
\nc{\tc}{\widetilde{can}}
\def\slq{\mbox{$\cO(SL_q(2))$}}
\def\asq{\mbox{$\cO(S_{q,s}^2)$}}
\def\esl{{\mbox{$E\sb{\frak s\frak l (2,{\Bbb C})}$}}}
\def\esu{{\mbox{$E\sb{\frak s\frak u(2)}$}}}
\def\ox{{\mbox{$\Omega\sp 1\sb{\frak M}X$}}}
\def\oxh{{\mbox{$\Omega\sp 1\sb{\frak M-hor}X$}}}
\def\oxs{{\mbox{$\Omega\sp 1\sb{\frak M-shor}X$}}}
\def\Fr{\mbox{Fr}}
\nc{\p}{{\rm pr}}
\newcommand{\can}{\operatorname{\it can}}
\newcommand{\Can}{\operatorname{\it Can}}
\def\H{\mathcal{H}}
\def\B{\mathcal{B}}
\def\P{\mathcal{P}}

\rnc{\phi}{\varphi}
\nc{\ha}{\mbox{$\alpha$}}
\nc{\hb}{\mbox{$\beta$}}
\nc{\hg}{\mbox{$\gamma$}}
\nc{\hd}{\mbox{$\delta$}}
\nc{\he}{\mbox{$\varepsilon$}}
\nc{\hz}{\mbox{$\zeta$}}
\nc{\hs}{\mbox{$\sigma$}}
\nc{\hk}{\mbox{$\kappa$}}
\nc{\hm}{\mbox{$\mu$}}
\nc{\hn}{\mbox{$\nu$}}
\nc{\hl}{\mbox{$\lambda$}}
\nc{\hG}{\mbox{$\Gamma$}}
\nc{\hD}{\mbox{$\Delta$}}
\nc{\hT}{\mbox{$\Theta$}}
\nc{\ho}{\mbox{$\omega$}}
\nc{\hO}{\mbox{$\Omega$}}
\nc{\hp}{\mbox{$\pi$}}
\nc{\hP}{\mbox{$\Pi$}}

\nc{\qpb}{quantum principal bundle}
\def\gal{-Galois extension}
\def\hge{Hopf-Galois extension}
\def\ses{short exact sequence}
\def\csa{$C^*$-algebra}
\def\ncg{noncommutative geometry}
\def\wrt{with respect to}
\def\Ha{Hopf algebra}

\def\C{{\Bbb C}}
\def\N{{\Bbb N}}
\def\R{{\Bbb R}}
\def\Z{{\Bbb Z}}
\def\T{{\Bbb T}}
\def\Q{{\Bbb Q}}
\def\cO{{\mathcal O}}
\def\cT{{\cal T}}
\def\cA{{\cal A}}
\def\cD{{\cal D}}
\def\cB{{\cal B}}
\def\cK{{\cal K}}
\def\cH{{\cal H}}
\def\cM{{\cal M}}
\def\cJ{{\cal J}}
\def\fB{{\frak B}}
\def\pr{{\rm pr}}
\def\ta{\tilde a}
\def\tb{\tilde b}
\def\td{\tilde d}
\def\rd{\mathrm{d}}
\author{Paul F. Baum}
\address{Mathematics Department, McAllister Building,  
The Pennsylvania State University,
University Park, PA  16802, USA\\
Instytut Matematyczny, Polska Akademia Nauk, ul.~\'Sniadeckich 8, Warszawa, 00-656 Poland}
\email{baum@math.psu.edu}
\author{Kenny De Commer}
\address{Department of Mathematics,
Vrije Universiteit Brussel,
Pleinlaan 2,
1050 Brussels, Belgium}
\email{kenny.de.commer@vub.ac.be}
\author{Piotr~M.~Hajac}
\address{Instytut Matematyczny, Polska Akademia Nauk, ul.~\'Sniadeckich 8, Warszawa, 00-656 Poland}
\email{pmh@impan.pl}
\title[Free actions of quantum groups]{\large Free actions of compact quantum groups\\ 
\vspace*{2mm} on unital C*-algebras\\ ~}
\vspace*{-15mm}\maketitle
\vspace*{-5mm}\begin{abstract}\normalsize
Let $F$ be a field, $\Gamma$ a finite group, and $\map(\Gamma,F)$ the Hopf algebra of all set-theoretic maps $\Gamma\rightarrow F$.
If $E$ is a finite field extension of $F$ and $\Gamma$ is its Galois group, the extension is Galois if and
only if the canonical map $E\otimes_FE\rightarrow E\otimes_F\map(\Gamma,F)$ resulting from viewing $E$ as a $\map(\Gamma,F)$-comodule
is an isomorphism. Similarly, a finite covering space is regular if and only if the analogous canonical map is an isomorphism. In this paper
we extend this point of view to actions of compact quantum groups on unital $C^*$-algebras. We prove that such an action is $C^*$-free
if and only if the canonical map (obtained using the underlying Hopf algebra of the compact quantum group) is an isomorphism.
In particular, we are able to express the freeness of a compact Hausdorff topological group action on a compact Hausdorff topological space in 
algebraic terms. As an application, we  show that a field of $C^*$-free actions yields a global $C^*$-free action.
\end{abstract}
{\small
\tableofcontents
\setcounter{tocdepth}{2}
}
\newpage

\section*{Introduction}
\setcounter{equation}{0}

\noindent A \emph{compact quantum group} \cite{w-sl87,w-sl98} is a unital $C^*$-algebra $H$ with a given unital injective $*$-homorphism $\Delta$ 
(referred to as
comultiplication)
\[
\Delta \colon H\longrightarrow H\underset{\text{min}}{\otimes}H
\]
that is coassociative, i.e.\ it renders the diagram 
\[
\xymatrix{
H\ar[r]^{\Delta}\ar[d]^{\Delta}& H\!\!\overset{\phantom{min}}{\underset{\mathrm{min}}{\otimes}}\!\! H\ar[d]^{\Delta\otimes\id}\\
H\!\!\overset{\phantom{min}}{\underset{\mathrm{min}}{\otimes} }\!\!H \ar[r]_{\id\otimes\Delta}&H\!\!\overset{\phantom{min}}
{\underset{\mathrm{min}}{\otimes} }\!\!H\!\!\underset{\mathrm{min}} {\otimes} \!\!H
}
\]
commutative, and such that the two-sided cancellation property holds:
\[
\{(a\otimes 1)\Delta(b)\;|\;a,b\in H\}^{\mathrm{cls}}
=H\underset{\mathrm{min}}{\otimes}H=
\{\Delta(a)(1\otimes b)\;|\;a,b\in H\}^{\mathrm{cls}}.
\]
Here $\otimes_\mathrm{min}$ denotes the spatial tensor product of $C^*$-algebras and 
$\mathrm{cls}$ denotes the closed linear span of a subset of a Banach space. 

Let $A$ be a unital $C^*$-algebra and
$\delta:A\rightarrow A\otimes_{\mathrm{min}}H$ an
injective  unital $*$-homomorphism. We call $\delta$ a {\em coaction}  of $H$ on $A$
 (or an action of the compact quantum group $(H,\Delta)$ on $A$) iff
\vspace*{-2.5mm}
\begin{enumerate}
\item
$(\delta\otimes\mathrm{id})\circ\delta=(\mathrm{id}\otimes\Delta)\circ\delta$
(coassociativity),
\item
$\{\delta(a)(1\otimes h)\;|\;a\in A,\,h\in H\}^{\mathrm{cls}}=
A\underset{\mathrm{min}}{\otimes}H$ (counitality).
\end{enumerate}

We shall consider three properties of coactions. 
\begin{definition}[\cite{e-da00}]
The coaction $\delta:A\rightarrow A\otimes_{\mathrm{min}}H$ is \emph{$C^*$-free} iff
\[
\{(x\otimes 1)\delta(y)\;|\;x,y\in A\}^{\mathrm{cls}}=
A\underset{\mathrm{min}}{\otimes}H.
\]
\end{definition}

Given a compact quantum group $(H,\Delta)$, we denote by
 $\cO (H)$ its dense
Hopf $*$-subalgebra spanned by the matrix coefficients of
irreducible unitary corepresentations~\cite{w-sl98,mv98}.
This is Woronowicz's Peter-Weyl theory in the case of compact quantum groups.
Moreover, denoting by $\otimes$  the purely algebraic tensor product over the field
	$\mathbb{C}$ of complex numbers, we define the
\emph{Peter-Weyl subalgebra} of $A$ (cf.~\cite{p-p95,s-pm11}) as
\[
\cP_H(A):=\{\,a\in A\,| \,\delta(a)\in A\otimes\cO (H)\,\}.
\]
Using the coassociativity of $\delta$, one can  check that $\cP_H(A)$ is a right $\cO (H)$-comodule algebra. In particular, $\P_H(H)=\cO(H)$.
The assignment $A\mapsto\cP_H(A)$ is functorial with respect to
equivariant unital $*$-homomorphisms and comodule algebra maps. We call it the
\emph{Peter-Weyl functor}.
\begin{definition}
The coaction $\delta:A\rightarrow A\otimes_{\mathrm{min}}H$ satisfies the \emph{Peter-Weyl-Galois (PWG) condition}
iff the canonical map 
\begin{align}\label{pwg}
can\colon \mathcal{P}_H(A)\underset{B}{\otimes} \mathcal{P}_H(A)&\longrightarrow 
\mathcal{P}_H(A)\otimes \mathcal{O}(H)\nonumber\\
can\colon x\otimes y&\longmapsto (x\otimes 1)\delta(y)
\end{align}
is bijective. Here $B=A^{\mathrm{co}H}:=\{a\in A\;|\;\delta(a)=a\otimes 1\}$ is the unital
$C^*$-subalgebra of coaction-invariants.
\end{definition}
\vspace*{-4mm}
\noindent
Throughout this paper  the tensor product over an algebra denotes the purely algebraic tensor product over 
that algebra.

\begin{definition}
The coaction $\delta:A\longrightarrow A\otimes_{\mathrm{min}}H$ is \emph{strongly monoidal} iff
for all left\linebreak
  \mbox{$\mathcal{O}(H)$-co}modules $V$ and $W$ the map
\begin{align*}
\beta: (\mathcal{P}_H(A)\Box V)   \underset{B}{\otimes} (\mathcal{P}_H(A)\Box W) &
\longrightarrow \mathcal{P}_H(A)\Box(V\otimes W)\\ 
\Big(\sum_ia_i\otimes v_i\Big)\otimes \Big(\sum_jb_j\otimes w_j\Big) &
\longmapsto \sum_{i,j}a_ib_j\otimes (v_i\otimes w_j)
\end{align*}
is bijective. 
\end{definition}
\vspace*{-2mm}
\noindent
In the above definition, we have used the cotensor product 
\[\label{pwcot}
\mathcal{P}_H(A)\Box V:=\{t\in \mathcal{P}_H(A)\otimes V\;|\;(\delta\otimes\id)(t)=
(\id\otimes{}_V\Delta)(t)\},
\]
where ${}_V\Delta\colon V\to \mathcal{O}(H)\otimes V$ is the given left coaction of 
$\mathcal{O}(H)$ on $V$. 
The coaction of 
$\mathcal{O}(H)$ on $V\otimes W$ is the diagonal coaction. 

The theorem of this paper is:
\begin{theorem}\label{theoGal}
Let $A$ be a unital $C^*$-algebra equipped with an action of a compact quantum group $(H,\Delta)$ given by
$\delta\colon A\rightarrow A\otimes_{\mathrm{min}}H$.  Then the following are equivalent:
\vspace*{-3mm}\begin{enumerate}
\item
The action of $(H,\Delta)$ on $A$ is $C^*$-free. 
\item
The action of $(H,\Delta)$ on $A$ satisfies the Peter-Weyl-Galois condition.
\item
The action of $(H,\Delta)$ on $A$ is strongly monoidal.
\end{enumerate}
\end{theorem}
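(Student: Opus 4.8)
The plan is to separate the three-way equivalence into an algebraic part, (2)$\Leftrightarrow$(3), and an analytic part, (1)$\Leftrightarrow$(2), with the latter carrying the real weight. The equivalence (2)$\Leftrightarrow$(3) is internal to the theory of $\cO(H)$-comodule algebras, since both conditions involve only the Peter-Weyl subalgebra $\mathcal{P}_H(A)$, its coinvariants $B$, and the Hopf $*$-algebra $\cO(H)$. First I would observe that evaluating the strong-monoidality map $\beta$ at the regular left comodule $V=W=\cO(H)$ recovers the canonical map $\can$, via the identification $\mathcal{P}_H(A)\Box\cO(H)\cong\mathcal{P}_H(A)$; this makes (3)$\Rightarrow$(2) immediate. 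For the converse I would use the inverse of $\can$ to build a translation map $\tau\colon\cO(H)\to\mathcal{P}_H(A)\otimes_B\mathcal{P}_H(A)$ and, following standard Hopf-Galois arguments, exhibit from it an explicit two-sided inverse of $\beta$ for arbitrary $V,W$; bijectivity of $\can$ is exactly what makes these formulas well defined on the cotensor products. In effect (2)$\Leftrightarrow$(3) is the assertion that the cotensor functor $V\mapsto\mathcal{P}_H(A)\Box V$ is strong monoidal precisely when $B\subseteq\mathcal{P}_H(A)$ is $\cO(H)$-Galois, which I would adapt from the comodule-algebra literature.

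The direction (2)$\Rightarrow$(1) is soft. Using that the first legs of $\delta(y)$ for $y\in\mathcal{P}_H(A)$ already lie in $\mathcal{P}_H(A)$, surjectivity of $\can$ gives
\[
\mathrm{span}\{(x\otimes1)\delta(y)\mid x,y\in\mathcal{P}_H(A)\}=\mathcal{P}_H(A)\otimes\cO(H).
\]
Since the Haar state provides an averaging that makes $\mathcal{P}_H(A)$ norm-dense in $A$, and $\cO(H)$ is dense in $H$, the right-hand side is dense in $A\otimes_{\mathrm{min}}H$; taking closed linear spans and enlarging $x,y$ to range over all of $A$ then yields the $C^*$-freeness identity.

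The direction (1)$\Rightarrow$(2) is the heart of the matter and the main obstacle, as it must convert an analytic density statement into exact algebraic surjectivity. I would decompose everything into spectral subspaces $A_\pi=\{a\in A\mid\delta(a)\in A\otimes C(\pi)\}$ indexed by the irreducible unitary corepresentations $\pi$, so that $\mathcal{P}_H(A)=\bigoplus_\pi A_\pi$, $\cO(H)=\bigoplus_\pi C(\pi)$, and $\delta(A_\pi)\subseteq A_\pi\otimes C(\pi)$. Since $\im(\can)$ is a left $\mathcal{P}_H(A)$-submodule of $\mathcal{P}_H(A)\otimes\cO(H)$ (one checks $(x\otimes1)\can(p\otimes q)=\can(xp\otimes q)$), surjectivity reduces to showing $1\otimes u^\pi_{ij}\in\im(\can)$ for every irreducible $\pi$ and all matrix indices $i,j$. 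To produce an exact preimage I would apply the bounded slice map $\id\otimes E_\pi\colon A\otimes_{\mathrm{min}}H\to A\otimes C(\pi)$, where $E_\pi$ is the Haar-state spectral projection onto the finite-dimensional $C(\pi)$, to the dense set of $C^*$-free combinations, and then exploit finiteness: freeness should force the relevant $B$-modules (the spectral subspaces, paired through the conjugate corepresentation $\bar\pi$) to be finitely generated projective over $B$, and finite generation is what upgrades density to an honest equality of algebraic spans, so that one can solve $\sum_k p_k q_{k(0)}\otimes q_{k(1)}=1\otimes u^\pi_{ij}$ exactly. The delicate point, and where I expect the real work, is establishing this finite projectivity directly from $C^*$-freeness -- equivalently, the closedness of each spectral subspace in the Hilbert $B$-module built from the conditional expectation $E=(\id\otimes h)\delta$ -- since without it the passage from dense image to full image fails. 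Once surjectivity and finite projectivity are in hand, $\mathcal{P}_H(A)$ is a principal $\cO(H)$-comodule algebra, whence $\can$ is bijective and (1)$\Rightarrow$(2) is complete.
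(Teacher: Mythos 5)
Your architecture matches the paper's quite closely: the soft direction (2)$\Rightarrow$(1) is the paper's argument verbatim, (2)$\Leftrightarrow$(3) is delegated to comodule-algebra theory as in the paper's Section~2, and for (1)$\Rightarrow$(2) you correctly isolate finite projectivity of the spectral subspaces $A_V$ over $B$ as the key analytic input (this is exactly the De Commer--Yamashita theorem, which the paper cites rather than reproves). However, there is a genuine gap at the decisive step of (1)$\Rightarrow$(2): your principle that ``finite generation is what upgrades density to an honest equality of algebraic spans'' is false as stated. A bounded $B$-module map out of a finitely generated projective Hilbert $B$-module need not have closed range: take $B=C[0,1]$, domain and target equal to $B$, and the map given by multiplication by the coordinate function $t$; its range $tC[0,1]$ is a proper dense submodule of the closed ideal $\{f\;|\;f(0)=0\}$. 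So knowing that $1\otimes u^{\pi}_{ij}$ lies in the closure of $\can\big(A_{\bar{\pi}}\otimes_B A_{\pi}\big)$ does not put it in the image, even with projectivity in hand. The paper closes this hole with an isometry argument, and here a second point you miss becomes essential: the canonical map in its given form $a\otimes b\mapsto (a\otimes 1)\delta(b)$ is \emph{not} an isometry for the Haar-state-induced inner products; only the flipped map $G_V\colon a\otimes b\mapsto \delta(a)(b\otimes 1)$ is, because $E_B=(\id\otimes\varphi_H)\circ\delta$ makes the inner products nest correctly. The paper therefore converts the approximating sums $\sum_i(p_{n,i}\otimes 1)\delta(q_{n,i})\to 1\otimes h$ into $\sum_i\delta(p_{n,i})(q_{n,i}\otimes 1)\to 1\otimes S(h)$ by applying the antipode to the $H_V$-leg (legitimate since $H_V$ is finite dimensional, so $S$ is continuous there), uses the norm equivalence on $A_V$ and the fact that the \emph{algebraic} interior tensor product $A_{\bar{V}}\otimes_B A_V$ is already complete (this is where projectivity enters, via the lemma on interior tensor products), concludes that the isometry $G_V$ has closed range containing $1\otimes S(h)$, and then pulls back with $a\otimes g\mapsto (1\otimes S^{-1}(g))\delta(a)$. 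Without this antipode-plus-isometry mechanism your passage from density to exact surjectivity does not close.

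Two further, smaller gaps. First, in (2)$\Rightarrow$(3), building an inverse of $\beta$ from the translation map is not a formal consequence of bijectivity of $\can$: for general Hopf--Galois extensions, strong monoidality of the cotensor functor requires faithful flatness (Schauenburg), and non-faithfully-flat Galois extensions exist; the colinearity identities for the translation map only hold inside the ambient tensor product, so well-definedness of your formulas on $(\mathcal{P}_H(A)\Box V)\otimes_B(\mathcal{P}_H(A)\Box W)$ is precisely where flatness is needed. The paper gets this for free from cosemisimplicity of $\cO(H)$: every comodule is coflat, Galois plus coflat gives equivariant projectivity (Schauenburg--Schneider), hence principality and faithful flatness, and only then strong monoidality. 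Second, your closing inference ``surjectivity and finite projectivity imply $\mathcal{P}_H(A)$ is principal, whence $\can$ is bijective'' is circular, since bijectivity of $\can$ is part of the definition of principality; the paper instead invokes Schneider's theorem that over a cosemisimple Hopf algebra surjectivity of the canonical map already implies bijectivity.
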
\vspace*{-5mm}
\noindent
Note that of the three equivalent conditions, the first uses functional analysis, the second is algebraic, and the 
third is categorical. The difficult implication, which is the core of the theorem, is (1)$\;\Longrightarrow\;$(2).
It proves that, for any $C^*$-free action, there exists a strong connection, a key technical device for
index-pairing computations (e.g.~\cite{hms03}).
In the spirit of Woronowicz's Peter-Weyl theory, our result states that the original functional analysis 
formulation of free action is equivalent to
 the much more algebraic PWG-condition.  

We now  proceed to explain  our main result in the classical setting.
Let $G$ be a compact Hausdorff topological group acting (by a continuous right action) on a compact 
Hausdorff topological space $X$
\[
X\times G\longrightarrow X.
\]
 It is immediate that the action is free i.e.\ $xg=x \Longrightarrow g=e$ 
(where $e$ is the identity element of~$G$) if and only if 
 \begin{align}
 X\times G &\longrightarrow X\underset{X/G}{\times}X\nonumber\\
 (x,g) &\longmapsto (x,xg)
 \end{align}
 is a homeomorphism. Here $X\times_{X/G}X$ is the subset of $X\times X$ consisting of pairs $(x_1, x_2)$ 
such that $x_1$ and $x_2$
 are in the same
 $G$-orbit.

 This is equivalent to the assertion that the $*$-homomorphism
 \[\label{ellwood}
 C(X\underset{X/G}{\times}X)\longrightarrow C(X\times G)
 \]
 obtained from the above map $(x,g)\mapsto (x,xg)$ is an isomorphism.
 Here, as usual, $C(Y)$ denotes the commutative $C^*$-algebra of all continuous complex-valued functions on the compact Hausdorff space $Y$.

 In turn, the assertion that $*$-homomorphism \eqref{ellwood} is an isomorphism is readily proved equivalent to
 \[\label{classcls}
\{(x\otimes 1)\delta(y)\;|\;x,y\in C(X)\}^{\mathrm{cls}}=
C(X)\underset{\mathrm{min}}{\otimes}C(G),
 \]
 where
\[\label{dualco}
\delta\colon C(X)\longrightarrow C(X)\underset{\mathrm{min}}{\otimes}C(G),\quad (\delta(f)(g))(x)=f(xg),
\]
is the $*$-homomorphism obtained from the action map $X\times G\rightarrow X$. 
Hence, in the case of a compact group acting on a compact space, freeness
agrees with $C^*$-freeness as defined in the setting of a compact quantum group acting on a unital $C^*$-algebra. Thus 
Theorem~\ref{theoGal} provides the following characterization of free actions in the classical case.
\bth\label{classical}
Let $G$ be a compact Hausdorff group acting continuously on a compact Hausdorff
space~$X$. Then the action is free if and only if the canonical map  
\[\label{canclas}
can\colon  \mathcal{P}_{C(G)}(C(X))\underset{C(X/G)}{\otimes} \mathcal{P}_{C(G)}(C(X))\longrightarrow \mathcal{P}_{C(G)}(C(X))\otimes \mathcal{O}(C(G))
\]
is an isomorphism.
\ethe

Observe that even in the above special case of a compact group acting on a compact space, 
a proof is required for the equivalence of ``free action" and the bijectivity of the canonical map
(PWG-condition). Theorem~\ref{classical} brings a new algebraic tool (strong connection)    
to the realm of compact principal bundles.

In this classical setting, 
 the Peter-Weyl algebra $\mathcal{P}_{C(G)}(C(X))$  is
 the algebra of continuous global sections of the associated
 bundle of algebras \mbox{$X\underset{G}{\times}\mathcal{O}(C(G))$}:
\[
\mathcal{P}_{C(G)}(C(X))=\Gamma\Big(X\underset{G}{\times}\mathcal{O}(C(G))\Big).
\]
Here $\mathcal{O}(C(G))$ is the subalgebra of $C(G)$ generated by matrix coefficients of irreducible 
representations of~$G$. 
We view 
$\mathcal{O}(C(G))$ as a representation space of $G$ via the formula 
\[\label{015}
\big(\varrho(g)(f)\big)(h):=f(g^{-1}h). 
\]
The algebra $\mathcal{O}(C(G))$ is  topologized
as the direct limit of its finite dimensional subspaces. Multiplication and addition of sections is pointwise.

Note that, since $\mathcal{O}(C(G))$ is cosemisimple, it belongs to the category of representations of $G$ 
that are purely algebraic direct sums of finite-dimensional representations of $G$. We denote this category by 
$\mathrm{FRep}^\oplus(G)$. Due to the cosemisimplicity of  $\mathcal{O}(C(G))$, the following formula for 
the left coaction of $\mathcal{O}(C(G))$ on $V$ 
\[\label{repcorep} 
({}_V\Delta(v))(g)=\varrho(g^{-1})(v),\quad\text{where }\;\varrho:\;G\longrightarrow GL(V)\;
\text{is a  representation},
\] 
establishes an equivalence of $\mathrm{FRep}^\oplus(G)$ with the category of \emph{all} left 
$\mathcal{O}(C(G))$-comodules. As with the special case $V=\mathcal{O}(C(G))$, 
all vector spaces in this category are topologized  as the direct limits of their finite dimensional subspaces.

 Theorem~\ref{classical} unifies free actions of compact Hausdorff
groups on compact Hausdorff spaces and principal actions of affine
algebraic groups on affine schemes~\cite{dg70,s-p04}. Thus the main result of our paper 
might be viewed as continuing the Atiyah-Hirzebruch program of transferring ideas (e.g.\ K-theory)
from algebraic geometry to topology~\cite{ah59,ah61}. In the same spirit, our main theorem 
(Theorem~\ref{theoGal}) unifies the $C^*$-algebraic concept of free actions of compact quantum groups
\cite{e-da00} with the Hopf-algebraic concept of principal coactions~\cite{hkmz11}.
Theorem~\ref{theoGal} implies the existence of  strong connections \cite{h-pm96} for free actions
of compact quantum groups on unital $C^*$-algebras (connections on compact quantum principal bundles)
thus providing a theoretical foundation for the plethora of concrete constructions studied over 
the past two decades within the general framework noncommutative geometry~\cite{c-a94}.
In this paper, we apply Theorem~\ref{theoGal} to fields of $C^*$-algebras (Corollary~\ref{application}).

The paper is organized as follows. In Section~1, we prove the key part of our main theorem, 
that is the equivalence of $C^*$-freeness
and the Peter-Weyl-Galois condition. In Section~2, 
we consider
the general algebraic setting of principal coactions. 
Following Ulbrich \cite{u-kh89} and Schauenburg \cite{s-p04},
we prove that the principality of a comodule algebra $\mathcal{P}$ over
a Hopf algebra $\mathcal{H}$
is equivalent to the exactness and strong monoidality of the cotensor product functor 
$\mathcal{P}\Box_\mathcal{H}$.
In particular, this proves the 
equivalence of the Peter-Weyl-Galois condition and strong monoidality for actions of compact quantum groups, 
thus completing the proof
of the main theorem.

 Although Theorem~\ref{classical} is a special
case of Theorem~\ref{theoGal}, the proof we give of  Theorem~\ref{classical}  is not a special case of the 
proof of Theorem~\ref{theoGal}. Therefore we treat 
Theorem~\ref{classical} separately, and prove it in Section~3.
The proof  uses the strong monoidality (i.e.\ the preservation of tensor products) 
of the Serre-Swan equivalence
and a general algebraic argument (Corollary~\ref{4cor}) of Section~2. In Section~4, we give a vector-bundle
 interpretation of the aforementioned general algebraic argument. This provides a much desired
translation between
the algebraic and topological settings.

In Section~5, as an application of our main result, we prove that if a  unital $C^*$-algebra $A$ equipped with 
an action of a compact quantum group can be fibred over
a compact Hausdorff space $X$ with the PWG-condition valid on each fibre, then the PWG-condition is valid for 
the action on $A$.
We end with an appendix discussing the well-known fact  that regularity of a finite covering is equivalent to 
bijectivity of the canonical map~\eqref{canclas}.

\section{Equivalence of $C^*$-freeness and Peter-Weyl-Galois}\label{pfmain}
\setcounter{equation}{0}

The implication ``PWG-condition\hspace{2mm}$\Longrightarrow$\hspace{2mm}$C^*$-freeness''
is proved as follows. The PWG-condition immediately implies that
\[
[(\P_H(A)\otimes \mathbb{C})\delta(\P_H(A))]^{\mathrm{ls}} = \P_H(A)\otimes \cO (H).
\]
As the right-hand 
side is a dense subspace of $A\underset{\rm min}{\otimes} H$ (see \cite[Theorem~1.5.1]{p-p95} 
and \cite[Proposition 2.2]{s-pm11}), we obtain the density condition defining $C^*$-freeness.

For the converse implication 
``PWG-condition\hspace{2mm}$\Longleftarrow$\hspace{2mm}$C^*$-freeness'' we
need some preparations. If $(V,\delta_V)$ is a finite-dimensional right 
 $H$-comodule,
we write $H_V$ for the smallest vector subspace of $H$ such that
$\delta_V(V)\subseteq V\otimes H_V$. We write 
\begin{equation}\label{eqCom} A_V := \{a \in A\mid \delta(a)\in A\otimes H_V\}
\end{equation} Note that in the case $(A,\delta)=(H,\Delta)$, we have $A_V =
H_V$. Thus $H_V$ is a coalgebra.

One can define a continuous projection map $E_V$ from $A$ onto $A_V$ as follows
\cite[Theorem~1.5.1]{p-p95}. Let us call two finite-dimensional comodules of $H$
\emph{disjoint} if the set of morphisms between them only contains the zero map.
Then $E_V$ is the unique endomorphism of $A$ which is the identity on $A_V$ and
which vanishes on $A_W$ for $W$ any finite-dimensional comodule disjoint from
$V$. In the special case of $(A,\delta)=(H,\Delta)$, we use the notation
$e_V$ instead of $E_V$. The equivariance property
\[\label{equivariance}
\delta\circ E_V = (\id\otimes e_V)\circ \delta.
\]
is proved by a straightforward verification. When $V$ is the trivial
representation, we write $E_V=E_B$ and $e_V=\varphi_H$,
where $B=A^{\mathrm{co}H}$ is the algebra of coaction invariants and
$\varphi_H$ is the invariant
state on $H$. Then the formula \eqref{equivariance} specializes to
\begin{equation}\label{eqDefEB}
 E_B =  (\id\otimes \varphi_H)\circ\delta.
\end{equation}

The key lemma in the proof of Theorem~\ref{theoGal} is:
\begin{lemma}[Theorem 1.2 in \cite{dy12}]\label{theoMak} 
 Let $\delta\colon A\to A\otimes_{\mathrm{min}}H$ be a $C^*$-free coaction, 
and let $V$ be a finite-dimensional $H$-comodule. 
Then $A_V$ is finitely generated projective as a right $B$-module.
\end{lemma}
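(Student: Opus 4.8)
The plan is to exhibit a finite \emph{projective basis} (dual basis) for $A_V$ over $B$: finitely many $\xi_1,\dots,\xi_N\in A_V$ together with right $B$-linear maps $\phi_1,\dots,\phi_N\colon A_V\to B$ satisfying $\sum_\alpha \xi_\alpha\,\phi_\alpha(a)=a$ for all $a\in A_V$; by the dual-basis lemma this splits $A_V$ as a direct summand of $B^N$ and hence proves finite generation and projectivity. The natural candidates for the $\phi_\alpha$ come from the conditional expectation: since $E_B=(\id\otimes\varphi_H)\circ\delta$ is a $B$-bimodule map onto $B$, each assignment $a\mapsto E_B(\eta^*a)$ with $\eta\in A_V$ is right $B$-linear into $B$ (note $A_V\cdot B\subseteq A_V$, because $\delta(ab)=\delta(a)(b\otimes 1)\in A\otimes H_V$ for $b\in B$). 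Thus it suffices to produce $\xi_1,\dots,\xi_N\in A_V$ forming a \emph{finite Parseval frame}, namely $\sum_\alpha \xi_\alpha\,E_B(\xi_\alpha^*a)=a$ for every $a\in A_V$. Equipping $A$ with the $B$-valued pre-inner product $\langle a,a'\rangle_B:=E_B(a^*a')$ and writing $\theta_{\xi,\eta}(a):=\xi\,E_B(\eta^*a)$, the target becomes the assertion that the \emph{frame operator} $\sum_\alpha\theta_{\xi_\alpha,\xi_\alpha}$ equals the identity on $A_V$.

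First I would reduce to $V$ irreducible. Since $H_V$ depends only on the set of irreducible constituents of $V$, the spectral projections $E_V$ split $A_V$ as an internal direct sum of the $A_\pi$ over the distinct irreducible constituents $\pi$, and a finite direct sum of finitely generated projective modules is again such. So assume $V$ is irreducible with unitary matrix corepresentation $u=(u_{ij})_{i,j=1}^n$, so that $H_V=\operatorname{span}\{u_{ij}\}$ is the $(n^2)$-dimensional matrix coalgebra of $u$. For $a\in A_V$ one has $\delta(a)=\sum_{i,j}a_{ij}\otimes u_{ij}$, and coassociativity forces the covariance relations $\delta(a_{ij})=\sum_k a_{kj}\otimes u_{ik}$ together with $a=\sum_i a_{ii}$; the Woronowicz orthogonality relations then express $E_B$ of products of the $a_{ij}$ through a fixed positive invertible Gram matrix on $H_V$. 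This bookkeeping is routine and is used only to identify the frame operator explicitly.

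The use of $C^*$-freeness is the following density input. Applying the continuous equivariant projection $\id\otimes e_V$ to the defining equality of $C^*$-freeness and using $\delta\circ E_V=(\id\otimes e_V)\circ\delta$, the closed span of $\{(x\otimes 1)\delta(y')\;|\;x\in A,\ y'\in A_V\}$ is seen to be dense in the finite free module $A\otimes H_V\cong A^{\,n^2}$. Translating this through the inner product $\langle\cdot,\cdot\rangle_B$ and the orthogonality relations above, the density says precisely that the identity operator of the Hilbert $B$-module completion $\mathcal E_V$ of $A_V$ lies in the norm-closure $\mathcal K(\mathcal E_V)$ of the operators $\sum_\alpha\theta_{\xi_\alpha,\xi_\alpha}$ with $\xi_\alpha\in A_V$. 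Standard Hilbert-module theory then converts the statement $\id_{\mathcal E_V}\in\mathcal K(\mathcal E_V)$ into the existence of a finite frame.

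The main obstacle is passing from this \emph{approximate} reconstruction to an \emph{exact algebraic} frame with vectors inside $A_V$ (rather than merely in the completion $\mathcal E_V$). I would handle it by a positivity/invertibility perturbation: choosing $\xi_\alpha\in A_V$ with $F:=\sum_\alpha\theta_{\xi_\alpha,\xi_\alpha}$ close enough to $\id_{\mathcal E_V}$ makes $F$ invertible in $\mathcal{L}(\mathcal E_V)$, after which $\{F^{-1/2}\xi_\alpha\}$ is an exact Parseval frame. The delicate point is that $F^{-1/2}\xi_\alpha$ a priori lies only in $\mathcal E_V$; one must use that $F$ preserves $A_V$ (as $\xi_\alpha\in A_V$ and $A_V\cdot B\subseteq A_V$) together with the finite-dimensionality of $H_V$ — which confines all the relevant spectral data to a fixed finite-dimensional space — to conclude that $A_V$ is already complete in the Hilbert-module norm and that the corrected frame vectors remain in $A_V$. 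This completeness-and-invertibility step, rather than the largely formal frame formalism, is where the real work lies.
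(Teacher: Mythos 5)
You should note at the outset that the paper does not actually prove this lemma: it imports it from \cite{dy12} (Theorem 1.2 there), adding only a remark on the classical case. So the relevant comparison is with that cited proof, and your strategy is essentially the same one: make $A_V$ a pre-Hilbert $B$-module via $\langle a,a'\rangle_B=E_B(a^*a')$, use $C^*$-freeness to realize the identity of this module as a norm-limit of finite-rank operators $\theta_{\xi,\eta}$, and then perturb an approximant close to the identity into an exact finite dual basis, which over the unital ring $B$ gives finite generation and projectivity. Your reduction to irreducible $V$, and the use of $\id\otimes e_V$ together with the equivariance \eqref{equivariance} to localize the freeness condition to $A\otimes H_V$, are also correct and match both \cite{dy12} and the manipulations of Section~\ref{pfmain}.

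The problem is that your pivotal step is asserted rather than proved, and it is exactly where the content of the cited theorem lies. The claim that the density of $\mathrm{span}\{(x\otimes 1)\delta(y')\mid x\in A,\ y'\in A_V\}$ in $A\otimes H_V$ ``says precisely'' that $\id_{\mathcal{E}_V}$ lies in the closure of the frame operators hides three nontrivial points: (i) one must approximate $1\otimes h$ for a carefully chosen $h$ (a combination of matrix coefficients of $V$ weighted by its modular Gram matrix) so that the induced averaging operator is the identity on $A_V$; (ii) one must massage the approximants so that \emph{both} legs land in the correct spectral subspaces ($A_V$ and $A_{\bar V}$) --- note that the needed projection of the first leg does not commute with multiplication, which is precisely why Section~\ref{pfmain} applies $\delta$ to the first leg, the antipode to the third, and multiplies legs before projecting by $e_{\bar V}$; the same dance is unavoidable here; and (iii) a $C^*$-norm approximation yields an $\mathcal{L}(\mathcal{E}_V)$-norm estimate only after invoking the norm equivalence of Lemma~\ref{theoMak2}. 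Moreover, the density argument naturally produces general finite-rank approximants $\sum_\alpha\theta_{\xi_\alpha,\eta_\alpha}$, not the positive ``Parseval'' form $\sum_\alpha\theta_{\xi_\alpha,\xi_\alpha}$ that your $F^{-1/2}$ requires; the clean fix is to drop positivity and use $\id=\sum_\alpha\theta_{T^{-1}\xi_\alpha,\eta_\alpha}$ for any finite-rank $T$ with $\|T-\id\|<1$. Finally, you locate ``the real work'' in the wrong place: completeness of $A_V$ in the Hilbert-module norm is immediate, since $A_V$ is the range of the bounded Podle\'{s} projection $E_V$ \cite{p-p95} (hence $C^*$-closed) and Lemma~\ref{theoMak2} --- which is independent of the present lemma, so there is no circularity --- gives the norm equivalence; once $\mathcal{E}_V=A_V$, the corrected frame vectors lie in $A_V$ automatically, with no extra argument about ``confining spectral data'' needed. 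In short: right architecture (the cited one), but the core analytic step is missing and the perceived difficulty is inverted.
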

\vspace*{-4mm} \noindent 
Note that in the classical case $X\times G\to X$, we have $H=C(G)$ and $B=C(X/G)$. 
The $B$-module 
$A_V$ is then $\Gamma(X\times_GH_V)$, and thus it is finitely generated projective. 

Define a $B$-valued inner product on $A_V$ by
\[\label{B-inner}
\langle a,b\rangle_B := E_B(a^*b).    
\]
\begin{lemma}[Corollary 2.6 in \cite{dy12}]\label{theoMak2} 
The $B$-valued inner product \eqref{B-inner} makes $A_V$ a (right) Hilbert $B$-module \cite{l-ec95}. 
The Hilbert module norm $\|a\|_B := \|\langle a,a\rangle_B\|^{1/2}$ is equivalent to the $C^*$-norm of $A$ 
restricted to $A_V$.
\end{lemma}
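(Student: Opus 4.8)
The plan is to verify the module-inner-product axioms first, then establish positive-definiteness, and finally extract completeness and the norm equivalence from Lemma~\ref{theoMak}. Since $E_B=(\id\otimes\varphi_H)\circ\delta$ is a $*$-preserving positive $B$-bimodule map with $E_B|_B=\id$ (all immediate from \eqref{eqDefEB} and the fact that $\varphi_H$ is a state), the axioms are routine. First, $A_V\cdot B\subseteq A_V$, because $\delta(ab)=\delta(a)(b\otimes 1)\in A\otimes H_V$ for $a\in A_V$ and $b\in B$; then right $B$-linearity $\langle a,bc\rangle_B=\langle a,b\rangle_B c$ and the symmetry $\langle a,b\rangle_B^*=\langle b,a\rangle_B$ follow from the bimodule and $*$-properties of $E_B$, and $\langle a,a\rangle_B=E_B(a^*a)\ge 0$ from its positivity.

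The first substantive point is positive-definiteness of \eqref{B-inner} on $A_V$, where I would use the finite dimensionality of $H_V$. Fix a basis $h_1,\dots,h_n$ of $H_V\subseteq\cO(H)$ and write $\delta(a)=\sum_j a_j\otimes h_j$ with $a_j\in A$; then $\langle a,a\rangle_B=\sum_{i,j}\varphi_H(h_i^*h_j)\,a_i^*a_j$. The scalar Gram matrix $G=(\varphi_H(h_i^*h_j))_{i,j}$ is positive-definite, since the invariant state restricts to the faithful Haar functional on the Hopf $*$-algebra $\cO(H)$, so $(x,y)\mapsto\varphi_H(x^*y)$ is a genuine inner product on the finite-dimensional space $H_V$. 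Factoring $G=M^*M$ with $M$ invertible and setting $b_k=\sum_j M_{kj}a_j$ gives $\langle a,a\rangle_B=\sum_k b_k^*b_k$; if this vanishes then each $b_k=0$, hence each $a_j=0$, hence $\delta(a)=0$, hence $a=0$ by injectivity of $\delta$. Thus $(A_V,\langle\cdot,\cdot\rangle_B)$ is a pre-Hilbert $B$-module.

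It remains to prove that $\|\cdot\|_B$ is complete and equivalent to the $C^*$-norm, which is the heart of the matter. One inequality is free: $E_B$ is contractive, so $\|a\|_B=\|E_B(a^*a)\|^{1/2}\le\|a^*a\|^{1/2}=\|a\|$. For the reverse bound $\|a\|\le C\|a\|_B$ I would invoke Lemma~\ref{theoMak}: since $A_V$ is finitely generated projective over $B$, it should admit a finite reconstruction $a=\sum_i u_i\langle v_i,a\rangle_B$ with $u_i,v_i\in A_V$ (a frame for the module). Granting this, the module Cauchy--Schwarz inequality $\|\langle v_i,a\rangle_B\|=\|E_B(v_i^*a)\|\le\|v_i\|_B\,\|a\|_B$ yields $\|a\|\le\sum_i\|u_i\|\,\|\langle v_i,a\rangle_B\|\le\big(\sum_i\|u_i\|\,\|v_i\|_B\big)\|a\|_B$.

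The main obstacle is precisely the production of such a frame, and it is here that projectivity, not mere positive-definiteness, is indispensable: a positive-definite $B$-valued inner product need not be complete, as one already sees for the rank-one free module $B=C([0,1])$ with $\langle f,g\rangle=\bar f w g$ weighted by a function $w$ vanishing at a point. Concretely, the difficulty is to represent the coefficient functionals of a projective dual basis $a=\sum_i u_i\psi_i(a)$ as inner products $\psi_i=\langle v_i,\cdot\rangle_B$, that is, to establish enough self-duality of $A_V$. I expect to extract this from the faithful conditional expectation together with the $C^*$-structure: passing to the Hilbert-module completion $\mathcal{E}$ of $A$ in $\|\cdot\|_B$ (legitimate by faithfulness of $E_B$), the algebra $A$ acts on $\mathcal{E}$ by an injective, hence isometric, $*$-representation, and the continuity of the spectral projection $E_V$ realizes $A_V$ as a complemented closed submodule; its finitely many $B$-module generators then assemble into the required frame. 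Once both inequalities hold, $\|\cdot\|_B$ and the $C^*$-norm are equivalent on $A_V$; since $A_V=\ker(\id-E_V)$ is closed in $A$, hence complete in the $C^*$-norm, it is complete in $\|\cdot\|_B$ as well, so $A_V$ is a Hilbert $B$-module (equivalently, the last step may be phrased through the open mapping theorem).
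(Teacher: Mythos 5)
The routine parts of your proposal are correct: the module axioms for \eqref{B-inner}, positive-definiteness via faithfulness of the Haar state on $\mathcal{O}(H)$, and the easy bound $\|a\|_B\le\|a\|$ are all fine. (Note that the paper itself gives no argument for this lemma; it is quoted verbatim from Corollary 2.6 of \cite{dy12}.) The gap is in the step you yourself call the heart of the matter, and the mechanism you propose for it fails. You complete $A$ in $\|\cdot\|_B$ and assert that $A$ acts on the completion $\mathscr{E}$ by an injective, hence isometric, $*$-representation. This is false in the generality of the lemma: take $\Gamma$ a non-amenable discrete group, $A=H=C^*(\Gamma)$ (the universal group $C^*$-algebra) with $\delta=\Delta$. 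This action is $C^*$-free, $B=\mathbb{C}$, and $E_B=\varphi_H$ is the Haar state, which is \emph{not} faithful on $C^*(\Gamma)$; the induced representation of $A$ on $\mathscr{E}=\ell^2(\Gamma)$ is the regular representation, whose kernel is precisely the nonzero kernel of $C^*(\Gamma)\to C^*_r(\Gamma)$. Moreover, even granting that $E_V$ is $\|\cdot\|_B$-bounded (true, via a Cauchy--Schwarz estimate for the completely positive map $\mathrm{id}\otimes\varphi_H$, but you do not supply it), its extension to $\mathscr{E}$ is an idempotent whose range is the \emph{closure} of $A_V$ in $\mathscr{E}$, not $A_V$ itself; identifying the two is exactly the completeness assertion being proved, so the argument is circular at the decisive point.

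This circularity cannot be repaired by soft arguments, because the abstract statement your strategy would need --- that a dense, algebraically finitely generated projective submodule of a Hilbert $C^*$-module must equal the whole module --- is false. Your own weighted example shows this once you view it inside its completion: $t\,C([0,1])$ is a free rank-one $C([0,1])$-module sitting densely but properly inside the Hilbert module $C_0((0,1])=\{f\in C([0,1]):f(0)=0\}$, and the restricted inner product is exactly $\langle f,g\rangle=\bar f\,t^2 g$. So knowing that $A_V$ is finitely generated projective (Lemma~\ref{theoMak} as a black box) and that $\langle\cdot,\cdot\rangle_B$ is positive definite does not produce the frame; one would have to show that the dual-basis functionals of the projective module are $\|\cdot\|_B$-bounded, and nothing in your argument does this. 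What is actually needed is a second, direct use of freeness. That is how the cited source \cite{dy12} proceeds: the Ellwood density condition is used to manufacture finitely many elements of the spectral subspace forming an explicit finite frame (after a perturbation/invertibility argument), and then finite generation, projectivity, completeness of $\|\cdot\|_B$, and the norm equivalence all follow simultaneously from that single reconstruction formula --- rather than projectivity serving as the input from which the frame is derived. As written, your proposal does not prove the lemma.
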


We will need the following lemma concerning the interior tensor product of Hilbert modules.
\begin{lemma}[cf.\ Proposition 4.5 in \cite{l-ec95}]\label{lemtens} 
Let $C$ and $D$ be unital $C^*$-algebras, and let 
$(\mathscr{E},\langle\,\cdot\,,\,\cdot\,\rangle_C)$ be a right Hilbert $C$-module that is finitely generated 
projective as a right $C$-module. 
Let $(\mathscr{F},\langle\,\cdot\,,\,\cdot\,\rangle_D)$ be an arbitrary right Hilbert $D$-module, and 
$\pi: C\rightarrow \mathcal{L}(\mathscr{F})$ be a unital \mbox{$^*$-homomorphism} 
of $C$ into the $C^*$-algebra of adjointable operators on~$\mathscr{F}$. 
Then the algebraic tensor product
 $\mathscr{E}\otimes^{\mathrm{alg}}_{C} \mathscr{F}$ is a right Hilbert $D$-module with respect to the 
inner product  given by
\[
\langle x\otimes y, z\otimes w\rangle := \langle y,\pi(\langle x,z\rangle_C)w \rangle_D.
\]  
\end{lemma}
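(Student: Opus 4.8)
The plan is to exploit finite projectivity in order to reduce the statement to the case of a \emph{free} module, where the interior tensor product is a finite orthogonal direct sum of copies of $\mathscr{F}$ and is therefore manifestly a Hilbert module. Recall that the general construction of the interior tensor product (\cite[Prop.~4.5]{l-ec95}) produces a Hilbert $D$-module as the completion of the quotient of $\mathscr{E}\otimes^{\mathrm{alg}}_{C}\mathscr{F}$ by the vectors of length zero. The content of the present lemma is precisely that, under the finite-projectivity hypothesis on $\mathscr{E}$, \emph{neither} the quotient \emph{nor} the completion is needed: the algebraic tensor product is already definite and complete.

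First I would treat the free case $\mathscr{E}=C^{n}$. There is a natural isomorphism of right $D$-modules $C^{n}\otimes^{\mathrm{alg}}_{C}\mathscr{F}\cong\mathscr{F}^{n}$ sending $(c_{1},\dots,c_{n})\otimes f$ to $(\pi(c_{1})f,\dots,\pi(c_{n})f)$; this is well defined because it respects the $C$-balancing through $\pi$, and it is inverted by $(f_{i})_{i}\mapsto\sum_{i}e_{i}\otimes f_{i}$ for the standard generators $e_{i}$. Using that $\pi$ is a $*$-homomorphism together with the adjointability identity $\langle\pi(c)y,w\rangle_{D}=\langle y,\pi(c^{*})w\rangle_{D}$, a one-line computation shows that the stated formula is carried to the standard inner product $\langle(f_{i})_{i},(g_{i})_{i}\rangle=\sum_{i}\langle f_{i},g_{i}\rangle_{D}$ on $\mathscr{F}^{n}$. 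Since a finite orthogonal direct sum of Hilbert $D$-modules is again a Hilbert $D$-module (the inner product is definite and the module is complete), the claim holds in the free case with no quotient and no completion.

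Next I would pass to the general case. Because $\mathscr{E}$ is finitely generated projective as a Hilbert $C$-module over the unital $C^{*}$-algebra $C$, it is orthogonally complemented in a free module: there is a self-adjoint idempotent $p=p^{*}=p^{2}\in M_{n}(C)=\mathcal{L}(C^{n})$ and a unitary isomorphism $\mathscr{E}\cong pC^{n}$ of Hilbert $C$-modules (concretely $p=(\langle x_{i},x_{j}\rangle_{C})_{ij}$ for a finite frame $\{x_{i}\}$ of $\mathscr{E}$). Tensoring the orthogonal splitting $C^{n}=pC^{n}\oplus(1-p)C^{n}$ with $\mathscr{F}$ over $C$ exhibits $\mathscr{E}\otimes^{\mathrm{alg}}_{C}\mathscr{F}$ as a direct summand of $C^{n}\otimes^{\mathrm{alg}}_{C}\mathscr{F}\cong\mathscr{F}^{n}$. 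Applying $\pi$ entrywise produces a self-adjoint projection $\pi_{n}(p)\in M_{n}(\mathcal{L}(\mathscr{F}))=\mathcal{L}(\mathscr{F}^{n})$, and under the identification of the previous paragraph the summand $\mathscr{E}\otimes^{\mathrm{alg}}_{C}\mathscr{F}$ corresponds exactly to the range $\pi_{n}(p)\mathscr{F}^{n}$. The range of a self-adjoint projection is a complemented, hence closed, hence complete submodule, and the inner product it inherits from $\mathscr{F}^{n}$ is definite; transporting this back along $\mathscr{E}\cong pC^{n}$ recovers precisely the formula in the statement. This shows that $\mathscr{E}\otimes^{\mathrm{alg}}_{C}\mathscr{F}$, with the given pairing, is a genuine Hilbert $D$-module.

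The routine points that still require checking are the $C$-balancing (so that the formula descends to the tensor product over $C$) and the explicit matching of the two inner products under the isomorphisms above; both are short $*$-algebra computations. The one genuinely load-bearing step is the reduction to a \emph{finite} free module: it is finiteness that makes $\mathscr{F}^{n}$ complete on the nose and that turns $\mathscr{E}\otimes^{\mathrm{alg}}_{C}\mathscr{F}$ into the range of an honest adjointable projection, thereby eliminating the completion step present in the general construction. I therefore expect the main obstacle to be justifying that no completion is required, i.e.\ that the algebraic tensor product already \emph{equals} the closed submodule $\pi_{n}(p)\mathscr{F}^{n}$ rather than merely sitting densely inside its completion; this is exactly the place where the finite-generation hypothesis on $\mathscr{E}$ (supplied in our application by Lemma~\ref{theoMak}) is indispensable.
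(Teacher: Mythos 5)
Your proof is correct and takes essentially the same route as the paper's own argument: establish the claim for the free module $\mathscr{E}=C^{n}$, where $C^{n}\otimes^{\mathrm{alg}}_{C}\mathscr{F}\cong\mathscr{F}^{n}$ carries the standard (definite, complete) inner product, and then use finite generation and projectivity to realize $\mathscr{E}$ as a direct summand of $C^{n}$ so that the conclusion passes to the summand. The only difference is one of detail: you make explicit the frame/projection bookkeeping (the unitary $\mathscr{E}\cong pC^{n}$ and the identification of $\mathscr{E}\otimes^{\mathrm{alg}}_{C}\mathscr{F}$ with $\pi_{n}(p)\mathscr{F}^{n}$) that the paper's two-line proof leaves implicit.
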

\vspace*{-5mm}
\begin{proof} We need to prove that the semi-norm $\|z\| = \|\langle z,z\rangle_D\|^{1/2}$ on 
$\mathscr{E}\otimes^{\mathrm{alg}}_{C} \mathscr{F}$ is in fact a norm with respect to which 
$\mathscr{E}\otimes^{\mathrm{alg}}_{C} \mathscr{F}$ is complete. 
The statement obviously holds for $\mathscr{E} = C^n$, the $n$-fold direct sum of the standard right $C$-
module $C$. 
Since $\mathscr{E}$ is finitely generated 
projective, $\mathscr{E}$ can be realized as a direct summand of $C^n$, so that the conclusion also applies 
for this case.
\end{proof}  
\vspace*{-2mm}
We are now ready to prove the implication
``PWG-condition\hspace{2mm}$\Longleftarrow$\hspace{2mm}$C^*$-freeness''.
By the \mbox{$C^*$-freeness} assumption, the image of $\can$ is dense in $A\otimes H$. In
particular, for a given finite-dimensional comodule $V$ and any $h\in H_V$, we
can find a sequence $k_n \in \N$ and elements $p_{n,i}$ and $q_{n,i}$ in
$\P_H(A)$ with $1\leq i\leq k_n$ such that 
\begin{equation}\label{eqLim}
\sum_{i=1}^{k_n} (p_{n,i}\otimes 1)\delta(q_{n,i})
 \underset{n\rightarrow \infty}{\longrightarrow} 1\otimes h
\end{equation} 
in the $C^*$-norm. Applying $\id\otimes e_V$ to this expression, and using
\eqref{equivariance}, we see that we can take $q_{n,i} \in A_V$.

Applying $\delta$ to the first leg of \eqref{eqLim} and using coassociativity,
we obtain 
\begin{equation}\label{eqLim2}
\sum_{i=1}^{k_n} (\delta(p_{n,i})\otimes
1)(\id\otimes \Delta)(\delta(q_{n,i})) \underset {n\rightarrow
\infty}{\longrightarrow} 1\otimes 1\otimes h.
\end{equation}

Observe now that, since $q_{n,i}\in A_V$, by \eqref{eqCom} we obtain
$(\id\otimes \Delta)(\delta(q_{n,i}))\in A_V\otimes H_V\otimes H_V$. Hence the
left-hand side of \eqref{eqLim2} belongs to the tensor product
$(A\otimes_{\mathrm{min}} H)\otimes H_V$. As $H_V$ is finite dimensional, the
restriction of the antipode $S$ of $\cO (H)$ to $H_V$ is continuous. Therefore,
we can apply $S$ to the third leg of \eqref{eqLim2} to
conclude
\[
\sum_{i=1}^{k_n}(\delta(p_{n,i})\otimes 1)(\id\otimes (\id \otimes
S)\circ\Delta)(\delta(q_{n,i})) 
\underset{n\rightarrow \infty}{\longrightarrow}
1\otimes 1\otimes S(h).
\]

Again by the finite dimensionality of $H_V$, multiplying the second and third
legs is a continuous operation, so that \[\label{limpq}\sum_{i=1}^{k_n}
\delta(p_{n,i})(q_{n,i}\otimes 1) \underset{n\rightarrow
\infty}{\longrightarrow} 1\otimes S(h).\]
Since $S(h) \in H_{\bar{V}}$,
where $\bar{V}$ is the contragredient of $V$, applying $\id \otimes e_{\bar{V}}$
to the above limit, and using the equivariance property \eqref{equivariance},
we infer that in the above limit we can choose $p_{n,i}\in A_{\bar{V}}$.

Consider now the right $B$-module map
\[
G_V:A_{\bar{V}}\overset{\textrm{alg}}{\underset{B}{\otimes}} A_V\longrightarrow
A_{\bar{V}\otimes V}\otimes H_{\bar{V}}, \qquad a\otimes b \longmapsto
\delta(a)(b\otimes 1).
\] 
By Lemma \ref{theoMak} and Lemma~\ref{lemtens}, the left-hand side becomes an
interior tensor product of
right Hilbert $B$-modules for the inner product \[\langle c\otimes d,a\otimes
b\rangle_{B} = E_B(d^*E_B(c^*a)b).\]

 On the other hand, equipping $H_{\bar{V}}$ with the Hilbert
space structure $\langle h,k\rangle = \varphi_H(h^*k)$, the right-hand side
is a right Hilbert $B$-module by
\[\langle b\otimes h, a\otimes g\rangle_B = \varphi_H(h^*g)E_B(b^*a).\]
From these formulas and \eqref{eqDefEB}, it follows that $G_V$ is an isometry
between these Hilbert modules. Hence the range of $G_V$ is closed.

From \eqref{limpq} and the equivalence of $C^*$- and Hilbert $C^*$-module norms
in Lemma~\ref{theoMak2}, 
it follows that the range of $G_V$ contains $1\otimes S(h)$. Therefore, as the
domain of $G_V$ is an \emph{algebraic} tensor product, we can find a
finite number of elements $p_i,q_i \in \P_H(A)$
such that \[\sum_{i} \delta(p_i)(q_i\otimes 1) = 1\otimes S(h).\] Now applying
 the map $a\otimes g \mapsto (1\otimes S^{-1}(g))\delta(a)$ to both sides yields
\[
\sum_{i} (p_i\otimes 1)\delta(q_i) = 1\otimes h.
\] 
As $h$ was arbitrary in $\cO (H)$, it follows that $\can$ is surjective.

Finally, as the Hopf algebra $\cO (H)$ is cosemisimple, according to
\cite[Remark 3.9]{s-hj90},  bijectivity of the canonical map $\can$  follows
from surjectivity. 
This completes the proof of the implication 
``PWG-condition\hspace{2mm}$\Longleftarrow$\hspace{2mm}$C^*$-freeness''.

\section{Equivalence of principality and strong monoidality}
\setcounter{equation}{0}

The framework of principal comodule algebras unifies in one category many algebraically constructed non-
commutative examples and 
classical compact principal bundles.

\begin{definition}[\cite{bh04}]\label{principality}
Let $\H$ be a Hopf algebra with bijective antipode, and let\linebreak
 $\Delta_\P\colon\P\to\P\otimes\H$ be a coaction making
$\P$ an $\H$-comodule algebra. We call $\P$   {\em principal}
 if and only if:
\vspace*{-5mm}\begin{enumerate}
\item
$\P{\otimes}_\B \P\ni p \otimes q \mapsto\can(p \otimes q):=
(p\otimes 1)\Delta_\P(q)\in \P \otimes \H$
is bijective, where\\ $\B=\P^{\mathrm{co}\H}:=\{p\in \P\;|\;\Delta_\P(p)=p\otimes 1\}$;
\item
there exists a left $\B$-linear right $\H$-colinear splitting of the
multiplication map \mbox{$\B\otimes \P\to \P$}.
\end{enumerate}
\end{definition}
\vspace*{-5mm}\noindent
Here (1) is the Hopf-Galois condition and (2) is the right equivariant left
projectivity of $\P$.

Alternately, one can approach principality through strong connections:
\begin{definition}
Let $\H$ be a Hopf algebra with  bijective antipode $S$,
and  $\Delta_\P\colon\P\to\P\otimes\H$ be a coaction
making $\P$ a right $\H$-comodule algebra.
A \emph{strong connection $\ell$} on $\P$ is a unital linear map $\ell :\H \rightarrow \P \otimes \P$ satisfying:
\newpage\begin{enumerate}
\item
$(\mathrm{id}\otimes \Delta_\P) \circ
\ell = (\ell \otimes \mathrm{id}) \circ \Delta$;
\item
$({}_\P \Delta \otimes \mathrm{id}) \circ
\ell = (\mathrm{id} \otimes \ell) \circ
\Delta$, where
${}_\P \Delta:=(S^{-1}\otimes\id)\circ\mathrm{flip}\circ\Delta_\P$;
\item
$\widetilde{can} \circ \ell=1 \otimes \mathrm{id}$, where
$\widetilde{can}\colon \P\otimes \P\ni p\otimes q\mapsto (p\otimes 1)\Delta_\P(q)\in \P\otimes\H$.
\end{enumerate}
\end{definition}
\vspace*{-5mm}\noindent
One can  prove (see \cite{bh} and references therein) that
a comodule algebra is principal if and only if it admits a strong
connection.

If $\Delta_M\colon M\to M\otimes C$ is a  coaction making $M$ a right comodule over a coalgebra $C$ and $N$ is a left $C$-
comodule via a  coaction ${}_N\Delta\colon N\to C\otimes N$,
then we define their {\em cotensor product} as
\[
M\underset{C}{\Box}N:=\{t\in M\otimes N\;|\;(\Delta_M\otimes\id)(t)=
(\id\otimes{}_N\Delta)(t)\}.
\]
In particular, for a right $\H$-comodule algebra $\P$ and a left
$\H$-comodule $V$, we observe that $\P\Box_\H V$ is a left $\P^{\co\H}$-
module in a natural way.
One of the key properties of principal comodule algebras is that,
for any finite-dimensional left $\H$-comodule $V$, the left $\P^{\co \H}$-module
$\P\Box_\H V$ is finitely generated projective~\cite{bh04}. Here $\P$ plays the
role of a principal bundle and $\P\Box_\H V$ plays the role of an associated
vector bundle. Therefore, we call $\P\Box_\H V$ an \emph{associated module}.

Principality can also be characterized by the exactness and strong monoidality of the cotensor functor. This 
characterisation uses the notion of coflatness of a comodule: a right comodule is \emph{coflat} if and only if 
cotensoring it with left comodules preserves exact sequences.
\begin{theorem}\label{4thm}
Let $\H$ be a Hopf algebra with bijective antipode, and $\P$ a right $\H$-comodule
algebra. Then $\P$ is principal if and only if $\P$ is right $\H$-coflat and for all
left  $\H$-comodules $V$ and $W$ the map
\begin{align*}
\beta: (\P\Box V)   \underset{\B}{\otimes} (\P\Box W) &\longrightarrow \P\Box(V\otimes W)\\ 
\Big(\sum_ia_i\otimes v_i\Big)\otimes \Big(\sum_jb_j\otimes w_j\Big) 
&\longmapsto \sum_{i,j}a_ib_j\otimes (v_i\otimes w_j)
\end{align*}
is bijective. 
In other words, $\P$ is principal if and only if 
the cotensor product functor is exact and strongly monoidal with respect to the above map $\beta$.
\end{theorem}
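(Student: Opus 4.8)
The plan is to reduce everything to the strong-connection characterization of principality recalled just above the statement: $\P$ is principal if and only if there exists a strong connection $\ell\colon\H\to\P\otimes\P$. I would prove the two implications separately, treating the bijectivity of $\beta$ (strong monoidality) and the coflatness of $\P$ as the two halves of ``exactness and strong monoidality of $\P\Box_\H(-)$''. Since every $\H$-comodule is the directed union of its finite-dimensional subcomodules and all the constructions in sight commute with filtered colimits and with exact sequences, I would first reduce both the hypotheses and their verification to finite-dimensional $V$ and $W$, where the associated modules $\P\Box V$ are finitely generated projective over $\B$ and the formula for $\beta^{-1}$ can be manipulated without convergence issues.

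For the forward implication, assume $\P$ is principal and fix a strong connection $\ell$. To invert $\beta$, I would take an element $t=\sum_k a_k\otimes(v_k\otimes w_k)$ of $\P\Box(V\otimes W)$ and feed the $\H$-datum recorded by the cotensor condition into $\ell$, producing a candidate preimage in $(\P\Box V)\otimes_{\B}(\P\Box W)$; the three defining properties of a strong connection are exactly what is needed to check that this assignment lands in the correct cotensor products, is balanced over $\B$, and is a two-sided inverse of $\beta$. For coflatness, I would invoke that a principal comodule algebra is faithfully flat over $\B$ together with the standard correspondence, for Hopf--Galois extensions, between flatness of $\P$ over $\B$ and coflatness of $\P$ as a right $\H$-comodule; alternatively $\ell$ itself furnishes a contracting homotopy that shows directly that $\P\Box_\H(-)$ preserves exactness.

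For the converse implication, I would first extract the Hopf--Galois condition from strong monoidality by specializing to $V=W=\H$ with the regular left coaction. By the fundamental theorem of Hopf modules the diagonal comodule $\H\otimes\H$ is isomorphic to $\H\otimes\H_{\mathrm{triv}}$, and combined with the canonical identifications $\P\Box_\H\H\cong\P$ (via $p\mapsto\Delta_\P(p)$) and $\P\Box_\H(\H\otimes\H_{\mathrm{triv}})\cong\P\otimes\H$, the map $\beta$ is carried precisely onto $\can\colon\P\otimes_{\B}\P\to\P\otimes\H$. Hence bijectivity of $\beta$ in this single instance yields bijectivity of $\can$, i.e.\ condition (1) of principality.

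The remaining and most delicate point is the equivariant projectivity, condition (2): a left $\B$-linear, right $\H$-colinear splitting of the multiplication $\B\otimes\P\to\P$. Here I would exploit the coflatness hypothesis. Under the Hopf--Galois condition already in force, coflatness of $\P$ converts, via the flat--coflat correspondence, into flatness of $\P$ over $\B$, and the strong monoidality (valid for all $V,W$) is what lets me promote this to the faithfulness needed to run the structure theory of Galois extensions; Schneider's structure theorem then upgrades a faithfully flat Hopf--Galois extension to an equivariantly projective one, equivalently produces a strong connection. I expect this last step to be the main obstacle, since it requires converting a purely homological exactness property into a concrete equivariant splitting through the descent machinery rather than through an explicit formula, and it is precisely where the bijective-antipode assumption does its work, by allowing one to pass between left and right comodules and to set up the relevant category equivalence. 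Assembling the two implications then gives the stated equivalence of principality with exactness and strong monoidality of the cotensor functor.
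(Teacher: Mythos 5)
Your proposal is sound in outline, and it splits into one half that genuinely differs from the paper and one half that essentially reproduces it. For the forward implication the paper never touches strong connections directly: it passes from principality to faithful flatness of $\P$ over $\B$ via \cite[Theorem~2.5]{bh04}, then quotes Schauenburg's monoidality theorem \cite[Theorem~6.15]{s-p98} for the bijectivity of $\beta$, and Schneider's Theorem~I \cite{s-hj90} for coflatness. Your explicit inverse built from a strong connection $\ell$ is a legitimately different, more self-contained route: for $t=\sum_k a_k\otimes(v_k\otimes w_k)\in\P\Box(V\otimes W)$ the element $\gamma(t)=\sum_k a_k\,\ell(w_{k(-1)})^{\langle 1\rangle}\otimes v_{k}\otimes_{\B}\ell(w_{k(-1)})^{\langle 2\rangle}\otimes w_{k(0)}$ satisfies $\beta\circ\gamma=\mathrm{id}$ by property (3) of $\ell$, its two factors are colinear by properties (1)--(2), and $\gamma\circ\beta=\mathrm{id}$ follows from the translation-map identity $p_{(0)}\,\ell(p_{(1)})^{\langle 1\rangle}\otimes_{\B}\ell(p_{(1)})^{\langle 2\rangle}=1\otimes_{\B}p$, which uses injectivity of $\can$. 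What your approach buys is an explicit formula; what it costs is one subtlety you should make explicit: $\gamma(t)$ is a priori only an element of $(\P\otimes V)\otimes_{\B}(\P\otimes W)$, and recognizing it as an element of $(\P\Box V)\otimes_{\B}(\P\Box W)$ requires the latter to inject into the former. This is exactly where your preliminary reduction to finite-dimensional $V$ and $W$ (so that associated modules are finitely generated projective, hence flat, over $\B$) is needed rather than merely convenient.

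For the converse you follow the paper's route: specializing to $V=W=\H$ is precisely how the paper extracts the Hopf--Galois condition, and your identification of $\beta$ with $\can$ is correct (the untwisting isomorphism being $h\otimes h'\mapsto hh'_{(1)}\otimes h'_{(2)}$ from the diagonal comodule $\H\otimes\H$ to $\H\otimes\H^{\mathrm{trivial}}$); the final step is the same appeal to descent theory that the paper makes by citing \cite[Theorem~5.6]{ss05}. One step of yours, however, is both unsupported and unnecessary: the claim that coflatness yields only flatness of $\P$ over $\B$, and that strong monoidality must then be used to ``promote'' flatness to faithfulness. You offer no mechanism for that promotion, and none is needed: for $\H$ with bijective antipode, Schneider's Theorem~I already gives faithful flatness of $\P$ over $\B$ from coflatness (equivalently, injectivity) of $\P$ as an $\H$-comodule together with surjectivity of $\can$; alternatively, \cite[Theorem~5.6]{ss05} produces equivariant projectivity directly from coflatness plus the Galois condition. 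Deleting that sentence and citing either result makes your converse complete.
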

\begin{proof}
\vspace*{-2mm}
The proof relies on putting together \cite[Theorem I]{s-hj90}, \cite[Theorem~6.15]{s-p98},
\cite[Theorem~2.5]{bh04} and \cite[Theorem~5.6]{ss05}. First assume that $\P$ is principal. Then $\P$ is 
right equivariantly projective, and it follows from \cite[Theorem~2.5]{bh04}
that $\P$ is faithfully flat. Now we can apply \cite[Theorem~6.15]{s-p98} to conclude that $\beta$ is bijective. 
Furthermore, 
by \cite[Theorem I]{s-hj90}, the faithful flatness of $\P$
implies the coflatness of $\P$.  Conversely, assume that cotensoring with $\P$ is exact and strongly monoidal 
with respect to $\beta$. Then substituting $\H$ for $V$ and $W$ yields the Hopf-Galois condition. Now 
\cite[Theorem~5.6]{ss05} implies the
equivariant projectivity of $\P$.
\end{proof}
\vspace*{0mm}

\begin{corollary}\label{4cor}
Let $A$ be a unital $C^*$-algebra equipped with an action of a compact quantum group $(H,\Delta)$ given by
$\delta\colon A\rightarrow A\otimes_{\mathrm{min}}H$.  Then the following are equivalent:
\vspace*{-3mm}\begin{itemize}
\item
The action of $(H,\Delta)$ on $A$ satisfies the Peter-Weyl-Galois condition.
\item
The action of $(H,\Delta)$ on $A$ is strongly monoidal.
\end{itemize}
\end{corollary}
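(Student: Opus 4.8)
The plan is to deduce Corollary~\ref{4cor} from Theorem~\ref{4thm} applied to the right $\cO(H)$-comodule algebra $\cP_H(A)$, whose algebra of $\cO(H)$-coinvariants is exactly $B=A^{\mathrm{co}H}$. Under this identification the Peter-Weyl-Galois condition is verbatim the Hopf-Galois condition~(1) of Definition~\ref{principality}, and strong monoidality of the coaction is verbatim the bijectivity of the map $\beta$ in Theorem~\ref{4thm}. Thus the whole task is to bridge ``Hopf-Galois'' and ``principal together with coflat'', and the instrument that does this is the cosemisimplicity of $\cO(H)$ furnished by Woronowicz's Peter-Weyl theory.

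First I would extract the two consequences of cosemisimplicity that let Theorem~\ref{4thm} do the work. The antipode of the Hopf $*$-algebra $\cO(H)$ is bijective (with $S^{-1}=*\circ S\circ*$), so the standing hypothesis of Theorem~\ref{4thm} is met. Next, the category of left $\cO(H)$-comodules is semisimple, so every short exact sequence of comodules splits; since the cotensor functor $\cP_H(A)\,\Box_{\cO(H)}-$ is additive it preserves split exact sequences, hence all exact sequences, and therefore $\cP_H(A)$ is automatically right $\cO(H)$-coflat. Consequently the coflatness clause in Theorem~\ref{4thm} is free of charge, and Theorem~\ref{4thm} reduces here to the single equivalence ``$\cP_H(A)$ is principal $\iff$ the action is strongly monoidal''.

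What remains is to match principality of $\cP_H(A)$ with the Peter-Weyl-Galois condition itself. One direction is trivial: principality contains the Hopf-Galois condition~(1), which is exactly PWG. For the converse I would again lean on cosemisimplicity, in the form of the known fact that a Hopf-Galois extension by a cosemisimple Hopf algebra is automatically principal~\cite{bh,s-hj90}. Concretely, bijectivity of $\can$ yields the translation map $h\mapsto\can^{-1}(1\otimes h)$, and the normalized Haar functional $\varphi_H$ on $\cO(H)$ provides the averaging that promotes this translation map to a bona fide strong connection $\ell\colon\cO(H)\to\cP_H(A)\otimes\cP_H(A)$; equivalently, cosemisimplicity upgrades Hopf-Galois to faithful flatness and thence to the right-equivariant left projectivity demanded by Definition~\ref{principality}. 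Granting this, PWG is equivalent to principality, and chaining with the previous paragraph gives PWG $\iff$ strongly monoidal, as desired.

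The step I expect to be the genuine obstacle is precisely this last upgrade, ``Hopf-Galois and cosemisimple $\imp$ principal''. Coflatness fell out softly from semisimplicity of the comodule category, but equivariant projectivity (the existence of a strong connection) is not formal: one must actually construct the bicolinear splitting, and the only tool available for that is the invariant integral $\varphi_H$. Verifying that the averaged translation map obeys the two colinearity axioms of a strong connection while retaining the normalization $\widetilde{can}\circ\ell=1\otimes\mathrm{id}$ is where the real work sits; once it is secured, everything else is bookkeeping around Theorem~\ref{4thm}.
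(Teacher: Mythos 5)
Your proposal is correct and takes essentially the same route as the paper's proof: both reduce the statement to Theorem~\ref{4thm} by observing that cosemisimplicity of $\cO(H)$ makes coflatness of $\cP_H(A)$ automatic and that bijectivity of the canonical map then yields equivariant projectivity, hence principality. The one step you flag as the genuine obstacle is handled in the paper by citing \cite[Theorem~5.6]{ss05}, and the Haar-averaged strong-connection construction you sketch is precisely the alternative route the paper itself mentions (via \cite{bb08} and \cite{bh04}).
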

\begin{proof}
The Hopf algebra $\cO(H)$ always has bijective antipode. It follows from \cite[Theorem~4.2]{w-sl87}
and \cite[Lemma~2.4]{bh04} that any comodule over this Hopf algebra is coflat. Hence 
\cite[Theorem~5.6]{ss05} implies  that
the equivariant projectivity condition (i.e.\ Condition~(2) of Definition~\ref{principality}) is valid for any
$\cO(H)$-comodule algebra such that the canonical map is bijective. 
The corollary now follows from Theorem~\ref{4thm}.
(As an alternative to \cite[Theorem~5.6]{ss05}, one can use the combination of 
 \cite[Theorem~4]{bb08} and \cite[Lemma~2.2]{bh04}.)
\end{proof}

\section{The classical case}
\setcounter{equation}{0}

In this section, we prove our main result in the classical case, i.e.\ we prove Theorem~\ref{classical}.
As in the proof of the general noncommutative case, we rely on the fact that the module of continuous
sections of an associated vector bundle is finitely generated projective. However, unlike in
 the proof in Section~\ref{pfmain}, herein we first prove strong monoidality, and then conclude
the PWG-condition. An entirely different proof of Theorem~\ref{classical}, using local triviality,
can be found in~\cite{bh14}.

To be consistent with general notation, 
we should only use  $C^*$-algebras $C(G)$, $C(X)$,
etc., rather than spaces themselves. However, this would make formulas too cluttered, so that
throughout this section we consistently omit writing~$C(\ )$ in the subscript and the argument
of the Peter-Weyl functor.

The implication ``PWG-condition\hspace{2mm}$\Longrightarrow$\hspace{2mm}freeness''
is 
proved as follows. The PWG-condition immediately implies that
\[
(\P_G(X)\otimes \mathbb{C})\delta(\P_G(X)) = \P_G(X)\otimes \cO (G).
\]
As the right-hand side is a dense subspace of $C(X)\otimes_{\rm min} C(G)$, 
we obtain the density condition~\eqref{classcls}. The latter is equivalent to  freeness, 
as explained in the introduction.

For the converse implication ``PWG-condition\hspace{2mm}$\Longleftarrow$\hspace{2mm}freeness'' we shall
use the Serre-Swan theorem.

\begin{theorem}[\cite{s-r62}]
Let $Y$ be a compact Hausdorff topological space. Then a $C(Y)$-module is finitely generated and projective if and only if it is isomorphic to the module of 
continuous global sections of a vector bundle over~$Y$.
\end{theorem}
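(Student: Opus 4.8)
The plan is to prove both implications by translating between vector bundles over $Y$ and idempotents in matrix algebras over $C(Y)$, with compactness entering each time through a partition of unity subordinate to a finite trivializing cover (recall that a compact Hausdorff space is normal, hence admits such partitions of unity).

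First I would treat the direction from bundles to modules. Given a complex vector bundle $E\to Y$, the goal is to realize $\Gamma(E)$ as a direct summand of a free module $C(Y)^N$. Since $Y$ is compact, I would choose a finite cover $U_1,\dots,U_k$ over which $E$ trivializes, a subordinate partition of unity $\{\rho_i\}$, and the local second-coordinate maps $p_i\colon E|_{U_i}\to\C^m$. Assembling $v\mapsto\big(\pi(v),\,\rho_1(\pi(v))p_1(v),\dots,\rho_k(\pi(v))p_k(v)\big)$, where $\pi\colon E\to Y$ is the projection and each $\rho_ip_i$ is extended by zero off $U_i$, yields a fiberwise-injective bundle map embedding $E$ as a subbundle of $Y\times\C^{N}$ with $N=km$; injectivity on the fiber over $y$ holds because some $\rho_i(y)>0$ there. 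Equipping the trivial bundle with a Hermitian metric (again built from a partition of unity), the orthogonal complement $E^{\perp}$ is a complementary subbundle, so $E\oplus E^{\perp}\cong Y\times\C^{N}$. Passing to global sections gives $\Gamma(E)\oplus\Gamma(E^{\perp})\cong C(Y)^{N}$, exhibiting $\Gamma(E)$ as a direct summand of a finitely generated free module, hence finitely generated projective.

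For the converse, let $M$ be a finitely generated projective $C(Y)$-module, so $M\oplus M'\cong C(Y)^{n}$ and the projection onto $M$ is an idempotent $e\in M_n(C(Y))\cong C(Y,M_n(\C))$ with $\mathrm{im}(e)\cong M$; after the standard similarity adjustment valid in any $C^*$-algebra I may assume each $e(y)$ is a self-adjoint projection. Setting $E_y:=e(y)\C^{n}$ and $E:=\{(y,v)\in Y\times\C^{n}\mid v\in E_y\}$, I would verify that $E$ is a locally trivial vector bundle: the rank of $e(y)$ is locally constant by continuity, and for $y$ near $y_0$ one has $\|e(y)-e(y_0)\|<1$, so the estimate $\|e(y_0)e(y)w-w\|\le\|e(y)-e(y_0)\|\,\|w\|$ for $w\in E_{y_0}$ shows that $w\mapsto e(y)w$ is an isomorphism $E_{y_0}\xrightarrow{\sim}E_y$; applying it to a basis of $E_{y_0}$ produces a continuous local frame. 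Finally, a continuous section of $E$ is precisely a map $s\in C(Y)^{n}$ with $e(y)s(y)=s(y)$, i.e. an element of $eC(Y)^{n}$, whence $\Gamma(E)\cong eC(Y)^{n}\cong M$.

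The main obstacle in both directions is the passage between continuity and local triviality dictated by the topology of $Y$: in the first direction it is the construction of a global finite-rank embedding of $E$ into a trivial bundle, which is exactly where the finite trivializing cover and the partition of unity are indispensable, and in the second it is confirming that the pointwise images of a continuous idempotent-valued function genuinely assemble into a locally trivial bundle rather than a merely continuous family of subspaces, for which the norm estimate above is the crux. Once the bundle $E$ has been produced, the identification $\Gamma(E)\cong eC(Y)^{n}\cong M$ is a routine unwinding of definitions.
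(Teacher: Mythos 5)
Your proposal is correct, but there is nothing in the paper to compare it against: the paper states this theorem with the attribution \cite{s-r62} and uses it as a black box (its role is to feed into the strong monoidality argument of Section~3), so no internal proof is given. What you have reconstructed is essentially Swan's original, now-standard argument: bundles $\to$ modules via a finite trivializing cover, a subordinate partition of unity, a fiberwise-injective embedding into a trivial bundle $Y\times\mathbb{C}^N$, and an orthogonal complement, giving $\Gamma(E)\oplus\Gamma(E^{\perp})\cong C(Y)^N$; and modules $\to$ bundles via an idempotent $e\in M_n(C(Y))$, the norm estimate $\|e(y)-e(y_0)\|<1$ producing local frames, and the identification $\Gamma(E)\cong eC(Y)^n\cong M$. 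Both directions are sound. Two points you compress deserve a sentence each in a fully written version: (i) that the image of your fiberwise-injective bundle map, and likewise $E^{\perp}$, are genuinely locally trivial subbundles of $Y\times\mathbb{C}^N$ --- this follows either by an open-condition/local-frame argument (pointwise independent continuous sections can be completed to a frame near any point) or, more elegantly, by applying your own second-direction lemma to the fiberwise orthogonal projection onto $E$, which is continuous once $E$ is known to have local frames; (ii) if $Y$ is disconnected the rank of $E$ need not be constant, so your local trivializations land in $\mathbb{C}^{m_i}$ of possibly different dimensions --- harmless, take $N=\sum_i m_i$. Neither issue affects correctness; your proof supplies exactly the argument the paper delegates to the literature.
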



For  a compact Hausdorff topological space  $Y$,  we denote by $\mathrm{Vect}(Y)$
the category of $\mathbb{C}$ vector bundles on $Y$.
 An object in
 $\mathrm{Vect}(Y)$ is a $\mathbb{C}$ vector bundle $E$ with base space ~$Y$.
 The projection of $E$ onto $Y$ is denoted by
 $\pi \colon E\rightarrow Y$.
A {section} of $E$ is a continuous map
\[
s \colon Y\longrightarrow E \quad\quad\text{with } \pi\circ s = \id_Y\,.
\]
A morphism in $\mathrm{Vect}(Y)$ is a vector bundle map
\[
\varphi \colon E\longrightarrow F,\qquad\varphi\colon E_y\longrightarrow F_y,\qquad\forall y\in Y.
\]
Note that $E_y$ and $F_y$ are both finite-dimensional vector spaces over $\mathbb{C}$ and 
$\varphi\colon E_y\rightarrow F_y $ 
is a linear transformation in the sense of standard linear algebra.

View the commutative $C^*$-algebra $C(Y)$ as a commutative ring with unit. Denote by
$\mathrm{FProj}(C(Y))$ the category of finitely generated projective $C(Y)$-modules.
An object in   the category 
$\mathrm{FProj}(C(Y))$ is a finitely generated projective $C(Y)$-module.
A morphism in $\mathrm{FProj}(C(Y))$ is a map of $C(Y)$-modules
$
\psi \colon M\rightarrow N
$.

If $E$ is a $\mathbb{C}$ vector bundle on $Y$, then
$\Gamma(E)$ denotes the $C(Y)$-module consisting of all continuous sections of $E$.
The module structure is pointwise, i.e.\ for $s_1, s_2, s \in \Gamma(E)$, \mbox{$f\in C(Y)$}, $y\in Y$,
\[
(s_1 + s_2)(y) = s_1(y) + s_2(y),
\qquad
(fs)(y) = f(y)s(y).
\]
According to the Serre-Swan theorem, the functor $\Gamma$
\[
\mathrm{Vect}(Y)\longrightarrow\mathrm{FProj}(C(Y)),\qquad E \longmapsto \Gamma(E),
\]
 is an equivalence of categories and preserves all the basic properties of the two categories.
In particular, $E \longmapsto \Gamma(E)$ preserves $\oplus$ and $\otimes$:
\begin{align}
\Gamma(E\oplus F) &= \Gamma(E)\oplus\Gamma(F),
\\
\Gamma(E\otimes F) &=\Gamma(E)\underset{C(Y)}{\otimes}\Gamma(F).\label{mon2}
\end{align}

Let $X$ be a compact Hausdorff space equipped with a free action of a compact Hausdorff  group~$G$.
Next, let
$\mathrm{FRep}(G)$ denote the category of representations of $G$ on finite-dimensional  complex vector spaces.
 Due to the freeness asumption, we can define the functor
\[
\mathrm{FRep}(G)\longrightarrow \mathrm{Vect}(X/G),\quad
V\longmapsto X\underset{G}{\times}V,
\]
preserving $\oplus$ and~$\otimes$: 
\begin{align}
X\underset{G}{\times}(V\oplus W) &= (X\underset{G}{\times}V)\:\oplus\:(X\underset{G}{\times}W),\\
X\underset{G}{\times}(V\otimes W) &= (X\underset{G}{\times}V)\:\otimes\:(X\underset{G}{\times}W).\label{mon1}
\end{align}
Combining the functor $\Gamma$ with the functor $X\times_G$ yields the functor
\[
\mathrm{FRep}(G)\longrightarrow \mathrm{FProj}(C(X/G)),\quad V\longmapsto \Gamma(X\underset{G}{\times}V).
\]
Furthermore, note that the $C(X/G)$-module $C_G(X,V)$ of all continuous $G$-equivariant functions from $X$ to $V$
is naturally isomorphic with $\Gamma(X\times_GV)$. Here $G$-equivariance means
\[
\forall\;x\in X,\, g\in G:\;f(xg)=\varrho(g^{-1})(f(x)),\qquad \varrho:\;G\longrightarrow GL(V).
\]
Hence
we can replace the above $\otimes$-preserving functor with the $\otimes$-preserving functor
 \[
\mathrm{FRep}(G)\longrightarrow \mathrm{FProj}(C(X/G)),\quad V\longmapsto C_G(X,V).
\]

The following  elementary observation is key in 
translating from the topological to the algebraic setting. 
\begin{lemma}\label{LemmaA}
Let $X$ be a compact Hausdorff space equipped with an action of a compact Hausdorff  group~$G$, and let
$V$ be a finite-dimensional representation of~$G$. Then the evident identification 
$C(X,V)=C(X)\otimes V$ determines
an equivalence of tensor functors:
\[
C_G(X,V) =\mathcal{P}_{G}(X)\Box V.
\]
\end{lemma}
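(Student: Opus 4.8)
The plan is to exploit the finite-dimensionality of $V$ to turn the statement into an explicit computation with $V$-valued functions on $X\times G$, reading off the cotensor condition as the equivariance condition. Fix a basis $v_1,\dots,v_n$ of $V$; this identifies $C(X,V)$ with $C(X)\otimes V$, a continuous $f\colon X\to V$ corresponding to $\sum_i a_i\otimes v_i$ with $a_i\in C(X)$ the (continuous) component functions, so that $f(x)=\sum_i a_i(x)v_i$. Under this identification I would compare the two subspaces of $C(X)\otimes V$ cut out, respectively, by $G$-equivariance and by the cotensor condition, and then verify that the identification is monoidal.

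First I would compute both legs of the cotensor condition as functions. Writing elements of $C(X)\otimes C(G)\otimes V$ as $V$-valued functions on $X\times G$, and using $\delta(a)(x,g)=a(xg)$ together with $({}_V\Delta(v))(g)=\varrho(g^{-1})(v)$, one obtains
\[
\big((\delta\otimes\id)(t)\big)(x,g)=f(xg),\qquad
\big((\id\otimes{}_V\Delta)(t)\big)(x,g)=\varrho(g^{-1})(f(x)).
\]
Since $C(X)\otimes\mathcal{O}(C(G))\otimes V$ embeds in $C(X\times G,V)$, equality of these two tensors is equivalent to the pointwise identity $f(xg)=\varrho(g^{-1})(f(x))$, i.e.\ to $f\in C_G(X,V)$. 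This immediately yields the inclusion $\mathcal{P}_G(X)\Box V\subseteq C_G(X,V)$.

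The main obstacle is the reverse inclusion, because the cotensor product requires the component functions to lie in the Peter-Weyl subalgebra $\mathcal{P}_G(X)$, whereas an a priori equivariant $f$ only has $a_i\in C(X)$. Here I would feed the equivariance back into the definition of $\mathcal{P}_G(X)$: writing $\varrho(g)v_j=\sum_i\varrho(g)_{ij}v_i$, equivariance gives $a_i(xg)=\sum_j\varrho(g^{-1})_{ij}\,a_j(x)$, hence $\delta(a_i)=\sum_j a_j\otimes u_{ij}$ with $u_{ij}(g):=\varrho(g^{-1})_{ij}$. Each $u_{ij}$ is a matrix coefficient of the finite-dimensional representation $g\mapsto\varrho(g^{-1})$, so $u_{ij}\in\mathcal{O}(C(G))$, and the sum over $j$ is finite; thus $\delta(a_i)\in C(X)\otimes\mathcal{O}(C(G))$ and $a_i\in\mathcal{P}_G(X)$. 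This gives $C_G(X,V)\subseteq\mathcal{P}_G(X)\Box V$, and hence the equality of the two $C(X/G)$-modules, the module structures agreeing since both are pointwise multiplication by functions on $X/G$.

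Finally, to upgrade the identification to an equivalence of tensor functors I would check naturality and monoidality. Naturality in $V$ is immediate: a $G$-intertwiner $\varphi\colon V\to W$ induces $\id_{C(X)}\otimes\varphi$ on both sides. For monoidality, the structure map on $\mathcal{P}_G(X)\Box(-)$ is the map $\beta$ from the strong-monoidality definition; unwinding $\beta\big((\sum_i a_i\otimes v_i)\otimes(\sum_j b_j\otimes w_j)\big)=\sum_{i,j}a_ib_j\otimes(v_i\otimes w_j)$ shows it corresponds to the function $x\mapsto f(x)\otimes h(x)$, which is exactly the pointwise tensor product governing the monoidal structure on $C_G(X,-)$ inherited from \eqref{mon1} and \eqref{mon2}. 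Since both structure maps agree under the identification, the two functors coincide as tensor functors, completing the proof.
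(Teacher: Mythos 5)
Your proof is correct and follows essentially the same route as the paper: identify $C(X,V)$ with $C(X)\otimes V$ via a basis, show the cotensor condition is equivalent to the pointwise equivariance $f(xg)=\varrho(g^{-1})(f(x))$, observe that this equivariance automatically forces the component functions into $\mathcal{P}_G(X)$, and verify compatibility of $\beta$ with the pointwise map $\mathrm{diag}$ to get the tensor-functor statement. Your explicit matrix-coefficient computation $\delta(a_i)=\sum_j a_j\otimes u_{ij}$ is just an unwound version of the paper's one-line observation that $\sum_i (e^i\circ f)\otimes {}_V\Delta(e_i)$ lies in $C(X)\otimes\mathcal{O}(C(G))\otimes V$ (the $u_{ij}$ being, up to index transposition, matrix coefficients of the contragredient representation).
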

\begin{proof}
Let $\{e_i\}_{i=1}^n$ be  a basis of $V$ and $\{e^i\}_{i=1}^n$ be the dual basis of~$V^*$.
Given $f\in C(X,V)$, we note that 
\begin{gather}
\sum_{i=1}^n (e^i\circ f)\otimes e_i\in\mathcal{P}_{G}(X)\Box V\nonumber\\
\Updownarrow\nonumber\\
\sum_{i=1}^n \delta(e^i\circ f)\otimes e_i=\sum_{i=1}^n (e^i\circ f)\otimes {}_V\Delta(e_i)\nonumber\\
\Updownarrow\nonumber\\
\forall\;x\in X,\, g\in G:\;f(xg)=\varrho(g^{-1})(f(x)).
\end{gather}
Indeed, the second equivalence is an immediate consquence of the definitions of coactions $\delta$ and~$\varrho$ (see \eqref{dualco}
and~\eqref{repcorep}). The first equivalence follows directly from the definition of cotensor product (see~\eqref{pwcot}) and the fact that
\[
\sum_{i=1}^n (e^i\circ f)\otimes {}_V\Delta(e_i)\in C(X)\otimes\mathcal{O}(G)\otimes V.
\]
Thus the evident identification yields $C_G(X,V) =\mathcal{P}_{G}(X)\Box V$.

Finally, let $\beta$ be the map defined in Theorem~\ref{4thm}, and let
\begin{align}
\text{diag}\colon C_G(X,V)\underset{C(X/G)}{\otimes}C_G(X,W)&\longrightarrow C_G(X,V\otimes W),
\nonumber\\
\text{diag}\colon f_1\otimes f_2&\longmapsto \big(x\mapsto f_1(x)\otimes f_2(x)\big).
\end{align}
The commutativity of the  diagram
\[
\xymatrix{
C_G(X,V)\underset{C(X/G)}{\otimes}C_G(X,W)\ar[r]^{\phantom{xxxxxx}\text{diag}}\ar[d]^{}&C_G(X,V\otimes W) \ar[d]^{}\\
(\mathcal{P}_G(X)\Box V)\underset{C(X/G)}{\otimes}(\mathcal{P}_G(X)\Box W)\ar[r]^{\phantom{xxxxxxxx}\beta}&\mathcal{P}_G(X)\Box (V\otimes W)
}
\]
proves that the identification  $C_G(X,V) =\mathcal{P}_{G}(X)\Box V$ defines an equivalence of tensor functors.
\end{proof}

Assume now that the action of $G$ on $X$ is free. Then, by the Serre-Swan theorem, the functor
$\Gamma(X\times_G\;)$ is strongly monoidal. Since it is equivalent as a tensor functor to $C_G(X,\;)$, we conclude from
Lemma~\ref{LemmaA} that the cotensor product functor
\[
\mathrm{FRep}(G)\longrightarrow \mathrm{FProj}(C(X/G)),\quad V\longmapsto \mathcal{P}_{G}(X)\Box V
\]
is also strongly monoidal.

Next, since $\mathcal{O}(G)$ is cosemisimple, any $\mathcal{O}(G)$-comodule is a purely algebraic direct 
sum
of finite-dimensional comodules. Furthermore, as the cotensor product is defined as the kernel of a linear map,
it commutes with such direct sums. As it is also clear that the map $\beta$ commutes with such direct sums,
we infer that the extended cotensor product functor
\[
\mathrm{FRep}^\oplus(G)\longrightarrow \mathrm{FProj}^\oplus(C(X/G)),\quad V\longmapsto 
\mathcal{P}_{G}(X)\Box V,
\]
is  strongly monoidal. Here $\mathrm{FProj}^\oplus(C(X/G))$ is the category of projective modules over 
$C(X/G)$ that are purely algebraic direct
sums of finitely generated projective $C(X/G)$-modules,
and $\mathrm{FRep}^\oplus(G)$ is the category of representations of $G$ defined above~\eqref{repcorep}.
(One can think of these categories as the ind-completions in the sense of~\cite[Section~8.2]{agv72}.)
Combining this with Corollary~\ref{4cor} allows us 
to conclude the proof of the implication 
``PWG-condition\hspace{2mm}$\Longleftarrow$\hspace{2mm}freeness''.

\section{Vector-bundle interpretation}

We now give a vector-bundle interpretation of the proof of the preceding section. To this end,
 we need to extend the functor $C_G(X,\;)$ to the  category $\mathrm{FRep}^\oplus(G)$, 
which includes the representation~$\mathcal{O}(G)$.
Let
$V$ be a purely algebraic direct sum of finite-dimensional representations of~$G$. We topologize $V$
as the direct limit of its finite-dimensional subspaces, and denote by $C(X,V)$ the space of
all continuous maps from $X$
to~$V$. An elementary topological argument shows that the image of any continuous map from $X$ to $V$ is 
contained in a finite-dimensional subspace of~$V$. Therefore, 
Lemma~\ref{LemmaA} generalizes to:
\begin{corollary}\label{LemmaB}
Let $V$ be an object in the category $\mathrm{FRep}^\oplus(G)$. Then the evident identification 
\mbox{$C(X,V)=C(X)\otimes V$} determines
an equivalence of tensor functors:
$$
C_G(X,V) =\mathcal{P}_{G}(X)\Box V.
$$
\end{corollary}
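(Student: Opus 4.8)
The plan is to reduce the statement to the finite-dimensional case already settled in Lemma~\ref{LemmaA}, by exploiting the fact that both functors $C_G(X,-)$ and $\mathcal{P}_G(X)\Box(-)$ commute with the direct sums defining the objects of $\mathrm{FRep}^\oplus(G)$. Write $V=\bigoplus_{\alpha}V_\alpha$ as a purely algebraic direct sum of finite-dimensional subrepresentations, and for each finite set $S$ of indices put $V_S:=\bigoplus_{\alpha\in S}V_\alpha$, so that $V$ carries the direct-limit topology $V=\varinjlim_S V_S$ and each $V_S$ is a finite-dimensional subrepresentation.

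First I would establish the topological reduction. Since $X$ is compact, the image $f(X)$ of any continuous map $f\colon X\to V$ is compact, and a compact subset of the direct limit of the finite-dimensional spaces $V_S$ is contained in a single $V_S$; this is exactly the elementary topological argument quoted above the statement. Consequently every $f\in C(X,V)$ factors through some $V_S$, giving $C(X,V)=\varinjlim_S C(X,V_S)$. The same reasoning applies after imposing $G$-equivariance (an equivariant map whose image lies in a finite-dimensional subspace has image in the $G$-invariant finite-dimensional space $V_S$ containing it), so that $C_G(X,V)=\varinjlim_S C_G(X,V_S)=\bigoplus_\alpha C_G(X,V_\alpha)$.

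Next I would treat the algebraic side. Because the cotensor product $\mathcal{P}_G(X)\Box(-)$ is by definition the kernel of a linear map and the algebraic tensor product commutes with arbitrary direct sums, one obtains $\mathcal{P}_G(X)\Box V=\bigoplus_\alpha\bigl(\mathcal{P}_G(X)\Box V_\alpha\bigr)$. Applying Lemma~\ref{LemmaA} to each finite-dimensional $V_\alpha$ and assembling the resulting identifications $C_G(X,V_\alpha)=\mathcal{P}_G(X)\Box V_\alpha$ into a direct sum yields the desired identification $C_G(X,V)=\mathcal{P}_G(X)\Box V$, which by construction is the restriction of the evident identification $C(X,V)=C(X)\otimes V$.

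Finally I would verify that this identification is an equivalence of tensor functors. Given $V,W\in\mathrm{FRep}^\oplus(G)$, I would decompose $V\otimes W=\bigoplus_{\alpha,\beta}(V_\alpha\otimes W_\beta)$; both the diagonal map $\mathrm{diag}$ and the map $\beta$ of Theorem~\ref{4thm} respect these direct-sum decompositions, so the commuting square of Lemma~\ref{LemmaA}, already known on each summand $V_\alpha\otimes W_\beta$, passes to the direct sum. The hard part will be nothing more than keeping this compatibility with direct sums honest on both functors simultaneously: once the topological reduction guarantees that no continuous map escapes a finite stage, the monoidal equivalence follows formally from the finite-dimensional statement.
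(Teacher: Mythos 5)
Your proposal is correct and follows essentially the same route as the paper: the paper's own (terse) argument likewise invokes the compactness of $X$ to show every continuous map into the direct limit $V$ lands in a finite-dimensional subrepresentation, then reduces to Lemma~\ref{LemmaA}, using (as in Section~3) that the cotensor product and the map $\beta$ commute with purely algebraic direct sums. You have merely written out explicitly the details the paper leaves as ``an elementary topological argument'' and ``Therefore, Lemma~\ref{LemmaA} generalizes.''
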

\vspace*{-4.5mm}
\noindent
Taking $V=\mathcal{O}(G)$ topologized with the direct limit topology, we immediately obtain the following 
presentation of the Peter-Weyl algebra:
\[\label{322}
C_G(X,\mathcal{O}(G)) = \mathcal{P}_{G}(X)\Box\mathcal{O}(G)\cong\mathcal{P}_{G}(X).
\]

Assume now that the action of $G$ on~$X$ is free. Then $X\times_G\mathcal{O}(G)$ is a vector bundle
in the sense that it is a direct sum of ordinary (i.e.\ with finite-dimensional fibers) vector bundles,
and 
\[
\Gamma(X\times_G\mathcal{O}(G))\cong C_G(X,\mathcal{O}(G))\cong\mathcal{P}_{G}(X).
\] 
Moreover, arguing as for the cotensor product functor, we conclude that the functor
\[
\mathrm{FRep}^\oplus(G)\longrightarrow \mathrm{FProj}^\oplus(C(X/G)),\quad V\longmapsto 
C_G(X,V),
\]
is strongly monoidal. Hence, taking advantage of \eqref{322}, we obtain
\[\label{325}
C_G(X,\mathcal{O}(G)\otimes\mathcal{O}(G)) \cong \mathcal{P}_{G}(X)
\underset{C(X/G)}{\otimes}\mathcal{P}_{G}(X).
\]
Next,
denote by $\mathcal{O}(G)^{\mathrm{trivial}}$ 
the vector space $\mathcal{O}(G)$ with the trivial action of $G$, 
i.e.\ every $g\in G$ is acting by the identity map of $\mathcal{O}(G)$.
Then,  as before, we obtain
\[\label{326}
C_G(X,\mathcal{O}(G)\otimes\mathcal{O}(G)^{\mathrm{trivial}}) 
\cong \mathcal{P}_{G}(X)
\underset{C(X/G)}{\otimes}C(X/G)\otimes\mathcal{O}(G)
\cong \mathcal{P}_{G}(X)\otimes\mathcal{O}(G).
\]

\begin{lemma}\label{replem}
The $G$-equivariant homeomorphism
$$
W\colon G\times G^{\mathrm{trivial}}\longrightarrow G\times G,\quad W((g,g')):=(g,gg'),
$$
gives an isomorphism of representations of~$G$
$$
\mathcal{O}(G)\otimes\mathcal{O}(G)^{\mathrm{trivial}}\cong \mathcal{O}(G)\otimes\mathcal{O}(G).
$$
Here $G\times G^{\mathrm{trivial}}$ and $G\times G$ are right $G$-spaces  via the formulas
$$
(g,g')h:=(h^{-1}g,g')\qquad\text{and}\qquad (g,g')h:=(h^{-1}g,h^{-1}g'),
$$
respectively.
\end{lemma}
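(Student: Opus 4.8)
The plan is to establish the two claims separately: that $W$ is a $G$-equivariant homeomorphism, and that pulling back functions along $W$ yields the asserted isomorphism of representations. Both reduce to short computations once we invoke the standard principle that a $G$-equivariant homeomorphism of right $G$-spaces induces, by pullback of functions, an isomorphism of the associated representations. First I would note that $W(g,g')=(g,gg')$ is continuous with continuous inverse $(g,k)\mapsto(g,g^{-1}k)$, so it is a homeomorphism; here we use only that multiplication and inversion in $G$ are continuous. Next I would check equivariance directly:
\[
W\big((g,g')\cdot h\big)=W(h^{-1}g,g')=(h^{-1}g,\,h^{-1}gg'),
\]
while $W(g,g')\cdot h=(g,gg')\cdot h=(h^{-1}g,h^{-1}gg')$, and the two agree.

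For the representation-theoretic content, I would use that a right $G$-space $Y$ carries a left $G$-representation on functions via $(\varrho(h)F)(y):=F(y\cdot h)$, a convention which reproduces \eqref{015} on $\mathcal{O}(G)$ (take $Y=G$ with $g\cdot h=h^{-1}g$). Writing $F=f_1\otimes f_2$ under the identification $\mathcal{O}(G)\otimes\mathcal{O}(G)=\mathcal{O}(G\times G)$, I would then match the two relevant representations with function representations of the given $G$-spaces. For $G\times G$ with action $(g,g')h=(h^{-1}g,h^{-1}g')$ one gets $(\varrho(h)F)(g,g')=f_1(h^{-1}g)f_2(h^{-1}g')$, which is exactly the diagonal action on $\mathcal{O}(G)\otimes\mathcal{O}(G)$; for $G\times G^{\mathrm{trivial}}$ with action $(g,g')h=(h^{-1}g,g')$ one gets $(\varrho(h)F)(g,g')=f_1(h^{-1}g)f_2(g')$, which is the action on $\mathcal{O}(G)\otimes\mathcal{O}(G)^{\mathrm{trivial}}$.

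Since $W$ is $G$-equivariant, its pullback $W^*(F):=F\circ W$ intertwines these two representations, the verification being the one-line identity $W^*(\varrho(h)F)=\varrho(h)(W^*F)$ that follows from equivariance. The one step requiring care is that $W^*$ must preserve representative functions, i.e.\ map $\mathcal{O}(G)\otimes\mathcal{O}(G)$ into itself rather than merely into $C(G\times G)$. I would verify this using comultiplication: for $F=f_1\otimes f_2$ there exist $u_j,v_j\in\mathcal{O}(G)$ with $f_2(gg')=\sum_j u_j(g)v_j(g')$, whence $(W^*F)(g,g')=f_1(g)f_2(gg')=\sum_j (f_1u_j)(g)\,v_j(g')$, exhibiting $W^*F$ as an element of $\mathcal{O}(G)\otimes\mathcal{O}(G)$; the computation for $(W^{-1})^*$ is analogous, so $W^*$ is a bijection. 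Combining these steps, $W^*$ is the desired isomorphism of representations. The main obstacle is purely bookkeeping: keeping the inversion conventions of \eqref{015} and of the two right actions mutually consistent; the only non-formal point is the stability of $W^*$ on the representative-functions subalgebra, which is handled by comultiplication.
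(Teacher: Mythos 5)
Your proof is correct and takes essentially the same approach as the paper: the paper's own proof is just a compressed version of yours, observing that since $\mathcal{O}(G)$ is a Hopf algebra the pullback $W^*$ restricts and corestricts to $\mathcal{O}(G)\otimes\mathcal{O}(G)\to\mathcal{O}(G)\otimes\mathcal{O}(G)^{\mathrm{trivial}}$, and that \eqref{015} and \eqref{repcorep} then exhibit $W^*$ as the required intertwiner. Your additional details (the equivariance check, the function-space representation convention, and the comultiplication computation showing stability of the representative functions) are exactly the steps the paper leaves implicit.
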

\begin{proof}
Since $\mathcal{O}(G)$ is a Hopf algebra, the pullback of $W$ restricts and corestricts to
\[
W^*\colon \mathcal{O}(G)\otimes\mathcal{O}(G)\longrightarrow
\mathcal{O}(G)\otimes\mathcal{O}(G)^{\mathrm{trivial}}.
\]
Taking into account \eqref{015} and \eqref{repcorep}, 
we infer that $W^*$ is the required intertwining operator.
\end{proof}

Combining Lemma~\ref{replem} with \eqref{325} and \eqref{326} gives
\[
\mathcal{P}_{G}(X)\underset{C(X/G)}{\otimes}\mathcal{P}_{G}(X)\cong
\mathcal{P}_{G}(X)\otimes\mathcal{O}(G).
\]
Finally, to see that this isomorphism 
is indeed   the canonical map, 
we  explicitly put together
all identifications used on the way. First, we observe that, since  the isomorphism 
\[
\mathcal{P}_{G}(X)\longrightarrow
\mathcal{P}_{G}(X)\Box\mathcal{O}(G)
\]
 is given by the coaction $\delta$, the identification \eqref{322}
is implemented by the  maps 
\begin{gather}
\xymatrix@=20mm{
\P_G(X)\ar@<1ex>[r]^{E} &
C_G(X,\mathcal{O}(G)),\ar@<1ex>[l]^{F}
}
\nonumber\\
\big(E(f)(x)\big)(g):=f(xg),\quad F(\alpha)(x):=\alpha(x)(e),\quad
E\circ F=\mathrm{id},\quad F\circ E=\mathrm{id}.
\end{gather}
We can now easily check that the following composition of isomorphisms
\begin{align*}
&\P_G(X)\!\!\!{\underset{C(X/G)}{\otimes}}\!\!\!\P_G(X)
\overset{E\otimes E}{\longrightarrow}
C_G(X,\mathcal{O}(G))
\!\!\!{\underset{C(X/G)}{\otimes}}\!\!\!
C_G(X,\mathcal{O}(G))
\overset{\mathrm{diag}}{\longrightarrow}
C_G\big(X,\mathcal{O}(G)\otimes\mathcal{O}(G)\big)
\overset{W^*\!\circ}{\longrightarrow}
\nonumber\\ &
C_{G}
\big(X,\mathcal{O}(G)\otimes\mathcal{O}(G)^{\mathrm{trivial}}\big)
\overset{\text{$\sum_i(\mathrm{id}\otimes e^i)\otimes e_i$}}{\longrightarrow}
C_{G}
(X,\mathcal{O}(G))\otimes\mathcal{O}(G)
\overset{F\otimes\mathrm{id}}{\longrightarrow}
\P_G(X)\otimes\mathcal{O}(G)
\end{align*}
is the canonical map, as desired.

\section{Application: fields of $C^*$-free actions}
\setcounter{equation}{0}

Let $A$ be a unital $C^*$-algebra with center $Z(A)$, let $X$ be a compact Hausdorff space and let $\theta:C(X)\rightarrow Z(A)$ be a unital inclusion. The triple $(A,C(X),\theta)$ is called a unital \emph{$C(X)$-algebra} (\cite[p.~154]{k-g88}). In the following, we simply consider $C(X)$ as a subalgebra of $A$. For $x\in X$, let $J_x$ be the closed 2-sided ideal in $A$ generated by the functions $f\in C(X)$ that vanish at $x$. Then we have quotient $C^*$-algebras $A_x = A/J_x$ with natural projection maps $\pi_x:A\rightarrow A_x$, and the triple $(X,A,\pi_x)$ is a \emph{field of $C^*$-algebras}. For any $a\in A$, the map $n_x: X \rightarrow \R, x \mapsto \|\pi_x(a)\|$ is upper semi-continuous \cite[Theorem~2.4]{dg83} 
(see also \cite[Proposition 1.2]{r-ma89}). If the latter map is continuous, the field is called continuous, but this property will not be necessary to assume for our purposes.


\begin{lemma}\label{LemDelx}
 Let $X$ be a compact Hausdorff space, $A$ a unital $C(X)$-algebra, and $(H,\Delta)$ a compact quantum group acting on $A$ via $\delta:A\rightarrow A\mten H$. Assume that $C(X)\subseteq A^{\co H}$. Then for each $x\in X$ there exists a unique coaction $\delta_x:A_x\rightarrow A_x\mten H$ such that for all $a\in A$ 
\begin{equation}\label{DefRed}
 \delta_x(\pi_x(a)) = (\pi_x\otimes \id)(\delta(a)).
\end{equation}
\end{lemma}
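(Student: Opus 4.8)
The plan is to construct $\delta_x$ by descending $\delta$ to the quotient $A_x = A/J_x$, and the crux is showing that the composite $(\pi_x \otimes \id) \circ \delta$ factors through $\pi_x$, i.e.\ that it annihilates the ideal $J_x$. First I would observe that uniqueness is immediate: since $\pi_x$ is surjective, formula \eqref{DefRed} determines $\delta_x$ on all of $A_x$, so there can be at most one such map. The entire content of the lemma is therefore the existence, which reduces to a single well-definedness claim. Let me write $\varphi := (\pi_x \otimes \id)\circ\delta \colon A \to A_x \mten H$; this is a unital $*$-homomorphism as a composite of such maps. To produce $\delta_x$ with $\delta_x \circ \pi_x = \varphi$, it suffices to prove $J_x \subseteq \ker\varphi$, whence $\varphi$ descends uniquely to a $*$-homomorphism $\delta_x\colon A_x \to A_x \mten H$.

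The heart of the argument is thus showing $\varphi(J_x) = 0$. Recall $J_x$ is the closed two-sided ideal generated by $\{f \in C(X) \mid f(x)=0\}$, so by continuity and the multiplicativity of $\varphi$ it is enough to check that $\varphi(f) = 0$ for each such $f$, and then that $\varphi(a f b) = 0$ for $a,b \in A$. The key input is the hypothesis $C(X) \subseteq A^{\co H}$: for $f \in C(X)$ we have $\delta(f) = f \otimes 1$, hence
\[
\varphi(f) = (\pi_x \otimes \id)(f \otimes 1) = \pi_x(f) \otimes 1 = f(x)\,(1_{A_x}\otimes 1).
\]
When $f(x) = 0$ this vanishes. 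For a generic generator $afb$ of the ideal with $f(x)=0$, multiplicativity of $\varphi$ gives $\varphi(afb) = \varphi(a)\varphi(f)\varphi(b) = 0$. Since $\varphi$ is norm-continuous (being a $*$-homomorphism of $C^*$-algebras) and $J_x$ is the \emph{closed} span of such products, we conclude $\varphi(J_x)=0$.

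Having established $J_x \subseteq \ker\varphi$, the universal property of the quotient $C^*$-algebra yields a unique $*$-homomorphism $\delta_x \colon A_x \to A_x \mten H$ satisfying \eqref{DefRed}, and it is automatically unital since $\varphi$ is. The main obstacle I anticipate is purely a matter of bookkeeping the ideal structure: one must take care that $J_x$ is generated as a two-sided ideal (not merely as a subspace) and invoke both multiplicativity and norm-continuity of $\varphi$ to pass from the generators to their closed linear span. Beyond that, the statement is a direct consequence of the invariance hypothesis $C(X) \subseteq A^{\co H}$, which is exactly what guarantees that the defining functions of $J_x$ are sent by $\varphi$ into scalar multiples of the unit that vanish at $x$. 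I would note in passing that coassociativity and counitality of $\delta_x$ can be verified afterward by applying $\pi_x \otimes \id$ (resp.\ $\pi_x \otimes \id \otimes \id$) to the corresponding identities for $\delta$, though the lemma as stated only asks for existence and uniqueness of the $*$-homomorphism.
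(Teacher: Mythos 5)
Your well-definedness and uniqueness arguments are correct and essentially identical to the paper's: since $\delta(f)=f\otimes 1$ for $f\in C(X)$ and $C(X)$ is central in $A$, the map $(\pi_x\otimes\id)\circ\delta$ annihilates $J_x$ and hence factors through $A_x$, while surjectivity of $\pi_x$ forces uniqueness. Your passing remarks on coassociativity and counitality also match the paper, which dismisses both as straightforward (for counitality note that $\pi_x\otimes\id$ is a surjective, hence open, map, so it carries the dense subspace $\delta(A)(1\otimes H)$ onto a dense subspace).

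However, there is a genuine gap: you never prove that $\delta_x$ is \emph{injective}, and your closing claim that ``the lemma as stated only asks for existence and uniqueness of the $*$-homomorphism'' rests on a misreading of the statement. In this paper a coaction is \emph{by definition} an injective unital $*$-homomorphism satisfying coassociativity and counitality (see the Introduction), so injectivity is part of what Lemma~\ref{LemDelx} asserts --- and it is exactly the part that does not come for free: when an injective map is pushed down to a quotient, injectivity is generally destroyed, since a priori the kernel of $(\pi_x\otimes\id)\circ\delta$ could be strictly larger than $J_x$. This is where the paper spends essentially all of its effort. Its argument runs as follows: if $\delta_x(\pi_x(a))=0$, then slicing the identity $(\pi_x\otimes\id)(\delta(a))=0$ with functionals $\omega$ on the second leg gives $(\id\otimes\omega)(\delta(a))\in J_x$ for every $\omega$; choosing a bounded positive approximate unit $(g_\alpha)_\alpha$ for $C_0(X\setminus\{x\})$, which acts as an approximate unit on $J_x=\overline{C_0(X\setminus\{x\})A}$, one obtains
\begin{equation*}
(\id\otimes\omega)\big(\delta(g_\alpha a)\big)=g_\alpha\,(\id\otimes\omega)\big(\delta(a)\big)
\;\overset{\mathrm{norm}}{\underset{\alpha}{\longrightarrow}}\;(\id\otimes\omega)\big(\delta(a)\big),
\end{equation*}
hence $\delta(g_\alpha a)=(g_\alpha\otimes 1)\delta(a)\to\delta(a)$ weakly; since $\delta$ is isometric onto its range, functionals on $A$ pull back (via Hahn--Banach) to functionals on $A\otimes_{\mathrm{min}}H$, so $g_\alpha a\to a$ weakly; finally $g_\alpha a\in J_x$ and the norm-closed subspace $J_x$ is weakly closed, whence $a\in J_x$, i.e.\ $\pi_x(a)=0$. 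Without this step (or a substitute for it), your proof only produces a unital $*$-homomorphism $\delta_x$, which is not enough for the intended application: Theorem~\ref{propBundle} applies Theorem~\ref{theoGal} fibrewise, and for that each $\delta_x$ must itself be an action of $(H,\Delta)$ in the sense of the paper, injectivity included.
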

\begin{proof} 
Let $x\in X$ and $f\in C(X)$ with $f(x)=0$. As $\delta(f) = f\otimes 1$ by assumption, it follows that 
$(\pi_x\otimes\id)(\delta(f)) = 0$. Hence $(\pi_x\otimes \id)(\delta(a))= 0$ for $a\in J_x$, so that $\delta_x$ 
can be defined by~\eqref{DefRed}. It is straightforward to check that each $\delta_x$ satisfies the 
coassociativity and counitality conditions.

Finally, to see that $\delta_x$ is injective, assume that $\delta_x(\pi_x(a))=0$. 
Then $(\pi_x\otimes \id)(\delta(a))=0$, whence $(\id\otimes \omega)(\delta(a)) \in J_x$ 
for all $\omega \in A^*$. In particular, if $(g_{\alpha})_{\alpha}$ is a bounded positive approximate unit for 
$C_0(X\setminus\{x\})$, then 
\[
g_{\alpha}(\id\otimes \omega)(\delta(a))\overset{\mathrm{norm}}{\underset{\alpha}{\longrightarrow}}
 (\id\otimes \omega)(\delta(a)).
\] 
Hence we obtain
 \[
(g_{\alpha}\otimes 1)\delta(a)\overset{\mathrm{weakly}}{\underset{\alpha}{\longrightarrow}} \delta(a).
\] 
However, as $(g_{\alpha}\otimes 1)\delta(a) = \delta(g_{\alpha}a)$ and  $\delta$ is injective, we find that 
\[
g_{\alpha}a\overset{\mathrm{weakly}}{\underset{\alpha}{\longrightarrow}} a.
\] 
Consequently,  $\pi_x(a)=0$, and we conclude that $\delta_x$ is injective.
\end{proof}
\vspace*{1mm}

\begin{theorem}\label{propBundle}  
Let $X$ be a compact Hausdorff space, $A$ a unital $C(X)$-algebra, and $(H,\Delta)$ a compact quantum 
group acting on $A$ via $\delta:A\rightarrow A\mten H$. Assume that $C(X)\subseteq A^{\co H}$. Then,  the 
coaction $\delta$ is $C^*$-free
\emph{if and only if} the coactions $\delta_x$ are $C^*$-free for each $x\in X$.
\end{theorem}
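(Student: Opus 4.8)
The plan is to prove each implication directly from the density condition defining $C^*$-freeness, without passing through the Peter--Weyl--Galois reformulation. The essential structural input is that $A\mten H$ is again a $C(X)$-algebra: since $C(X)$ lies centrally in $A$, the copy $C(X)\otimes 1$ is central in $A\mten H$, and one checks (as for any $C(X)$-algebra tensored with a fixed $C^*$-algebra) that the fibre of $A\mten H$ at $x$ is $A_x\mten H$ with quotient map $\pi_x\otimes\id$. Granting this, the norm is recovered fibrewise, $\|\zeta\|=\sup_{x\in X}\|(\pi_x\otimes\id)(\zeta)\|$, and for fixed $\zeta$ the function $x\mapsto\|(\pi_x\otimes\id)(\zeta)\|$ is upper semicontinuous by \cite[Theorem~2.4]{dg83}. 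These two facts are what drive the argument, and I would record them first.

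For the implication ``$\delta$ is $C^*$-free $\Rightarrow$ each $\delta_x$ is $C^*$-free'' I would simply push the global density condition through the fibre map. The map $\pi_x\otimes\id\colon A\mten H\to A_x\mten H$ is a surjective $*$-homomorphism, and by \eqref{DefRed} it sends $(a\otimes1)\delta(b)$ to $(\pi_x(a)\otimes1)\delta_x(\pi_x(b))$. Since $\pi_x$ is onto, the image of $\{(a\otimes1)\delta(b)\mid a,b\in A\}$ is exactly $\{(\alpha\otimes1)\delta_x(\beta)\mid\alpha,\beta\in A_x\}$. A surjective bounded linear map between Banach spaces carries dense subspaces to dense subspaces (an $\varepsilon$-argument via the open mapping theorem), so density of the linear span of the former in $A\mten H$ forces density of the linear span of the latter in $A_x\mten H$, which is precisely $C^*$-freeness of $\delta_x$.

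For the converse the idea is a partition-of-unity gluing of fibrewise approximations. Fix $\zeta\in A\mten H$ and $\varepsilon>0$. For each $x_0\in X$, $C^*$-freeness of $\delta_{x_0}$ lets me approximate $(\pi_{x_0}\otimes\id)(\zeta)$ within $\varepsilon$ by a finite sum $\sum_i(\alpha_i\otimes1)\delta_{x_0}(\beta_i)$; lifting the $\alpha_i,\beta_i$ along $\pi_{x_0}$ to elements of $A$ produces $T_{x_0}=\sum_i(a_i\otimes1)\delta(b_i)$ with $\|(\pi_{x_0}\otimes\id)(\zeta-T_{x_0})\|<\varepsilon$. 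By upper semicontinuity the set $U_{x_0}=\{x:\|(\pi_x\otimes\id)(\zeta-T_{x_0})\|<\varepsilon\}$ is open and contains $x_0$. Compactness of $X$ yields a finite subcover $U_{x_1},\dots,U_{x_m}$; write $T_k$ for the corresponding element $T_{x_k}=\sum_i(a_i^{(k)}\otimes1)\delta(b_i^{(k)})$ and choose a partition of unity $\chi_1,\dots,\chi_m\in C(X)$ subordinate to the cover. Because each $\chi_k\in C(X)\subseteq A$, the element $T:=\sum_k(\chi_k\otimes1)T_k=\sum_{k,i}(\chi_k a_i^{(k)}\otimes1)\delta(b_i^{(k)})$ again lies in the linear span of $\{(a\otimes1)\delta(b)\}$. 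Using $\sum_k\chi_k=1$ and $\pi_x(\chi_k)=\chi_k(x)1$, one finds $(\pi_x\otimes\id)(\zeta-T)=\sum_k\chi_k(x)(\pi_x\otimes\id)(\zeta-T_k)$; the sum effectively runs over those $k$ with $\chi_k(x)>0$, for which $x\in\operatorname{supp}\chi_k\subseteq U_{x_k}$ and hence the $k$-th factor has norm $<\varepsilon$. Thus $\|(\pi_x\otimes\id)(\zeta-T)\|\le\sum_k\chi_k(x)\varepsilon=\varepsilon$ for every $x$, and taking the supremum over $x$ gives $\|\zeta-T\|\le\varepsilon$. So every $\zeta$ lies in the closed linear span of $\{(a\otimes1)\delta(b)\}$, i.e.\ $\delta$ is $C^*$-free.

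The main obstacle is the structural fact that the fibre of $A\mten H$ at $x$ is honestly $A_x\mten H$, equivalently that $0\to J_x\mten H\to A\mten H\to A_x\mten H\to0$ is exact. This is exactly what turns the fibrewise estimate into an estimate for the genuine $C(X)$-algebra fibre norm of $A\mten H$, and hence for $\|\zeta-T\|$; without it $A_x\mten H$ could be a proper quotient of the true fibre and the inequality would run the wrong way. I expect this exactness to be the one place requiring the general theory of $C(X)$-algebras (it holds because $C(X)$ is central and nuclear) rather than a direct computation. Everything else is the bookkeeping of the gluing, which goes through precisely because the patching functions lie in $C(X)\subseteq A^{\co H}$ and can be absorbed into the first tensor leg without disturbing the form $(a\otimes1)\delta(b)$; note that this argument establishes the stated $C^*$-freeness statement directly, the Peter--Weyl--Galois reformulation then following from Theorem~\ref{theoGal} applied fibrewise and globally.
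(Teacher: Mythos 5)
Your first implication is fine (it needs only that $\pi_x\otimes\id\colon A\mten H\to A_x\mten H$ is a continuous surjective $*$-homomorphism intertwining $\delta$ with $\delta_x$), and it is essentially the paper's argument. The converse, however, rests on a structural claim that is false in general, and it is exactly the one you flagged: that the $C(X)$-algebra fibre of $A\mten H$ at $x$ is $A_x\mten H$, i.e.\ that $0\to J_x\mten H\to A\mten H\to A_x\mten H\to 0$ is exact. This is the assertion that $-\mten H$ is exact on the sequence $0\to J_x\to A\to A_x\to 0$; what it requires is exactness of $H$ as a $C^*$-algebra, and neither nuclearity of $C(X)$ nor centrality of the generating ideal rescues it. Indeed, Kirchberg's characterization of exactness uses a sequence of precisely this central form: take $X=\mathbb{N}\cup\{\infty\}$ and $A=\prod_n M_n$, a unital $C(X)$-algebra via $g\mapsto (g(n)1_{M_n})_n$; then $J_\infty=\bigoplus_n M_n$, $A_\infty=\prod_n M_n/\bigoplus_n M_n$, and $H$ is exact if and only if $\ker(\pi_\infty\otimes\id)=J_\infty\mten H$ for such sequences. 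Compact quantum groups with non-exact $C^*$-algebras exist (e.g.\ $H=C^*(F_2)$ with $\Delta(u_\gamma)=u_\gamma\otimes u_\gamma$, non-exact by Wassermann's theorem), so the hypotheses of the theorem do not exclude this. Concretely, your argument then breaks where you feared: for $w\in\ker(\pi_\infty\otimes\id)\setminus (J_\infty\mten H)$ the function $x\mapsto\|(\pi_x\otimes\id)(w)\|_{A_x\mten H}$ equals $\|w_n\|$ at $n$ and $0$ at $\infty$, with $\limsup_n\|w_n\|>0$, so it is not upper semicontinuous; hence your sets $U_{x_0}$ need not be open and the partition-of-unity gluing cannot start. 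The citation \cite[Theorem~2.4]{dg83} gives upper semicontinuity only for the honest quotient fibres $(A\mten H)/(J_x\mten H)$, whereas fibrewise $C^*$-freeness gives approximation only in the minimal norm; these two norms need not agree, and that mismatch is the whole difficulty. (Your sup-formula, by contrast, does survive: $A\mten H\to\prod_x (A_x\mten H)$ is injective by a slice-map argument, hence isometric.)

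This is precisely why the paper does not take the ``direct'' route. It works with the honest fibres $A_x\otimes_x H:=(A\mten H)/(J_x\mten H)$, explicitly noting that these are completions of $A_x\otimes H$ in a possibly non-minimal $C^*$-norm, and it invokes Theorem~\ref{theoGal} on each fibre: $C^*$-freeness of $\delta_x$ yields the PWG condition for $\delta_x$, hence an \emph{exact algebraic} identity $\sum_i(p_i\otimes 1)\delta_x(q_i)=1\otimes h$ with finitely many $p_i,q_i\in\cP_H(A_x)$, for each $h\in\cO(H)$. Lifting the $p_i,q_i$ through the surjection $\cP_H(A)\to\cP_H(A_x)$ (surjective via the projections $E_V$) produces $z_x$ in the span of $(A\otimes 1)\delta(A)$ whose image in $A_x\otimes_x H$ is \emph{exactly} $1\otimes h$; being an identity in the algebraic tensor product, it is insensitive to which completion the fibre happens to be. The gluing is then carried out entirely with the quotient-fibre norms, where upper semicontinuity and the sup formula are valid by \cite{dg83}, and $C^*$-freeness follows because the closed span of $(A\otimes 1)\delta(A)$ is a left $A\otimes 1$-module containing $1\otimes\cO(H)$. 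So the fibrewise use of the PWG condition is not an avoidable detour: it is what converts approximate information in the minimal norm into exact information valid in every completion. Your proof as written is correct only under the additional hypothesis that $H$ is exact.
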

\vspace*{-5mm}
\begin{proof} 
First note that $A\mten H$ is again a $C(X)$-algebra in a natural way. 
We will denote the quotient $(A\mten H)/(J_x\mten H)$ by $A_x\otimes_x H$. 
This will be a $C^*$-completion of the algebraic tensor product algebra $A_x\otimes H$ (not necessarily the 
minimal one). We will 
denote the quotient map at $x$ by $\pi_x\otimes_x \id:A\mten H \rightarrow A_x\otimes_x H$.

The implication ``$\delta$ is $C^*$-free $\Longrightarrow$ the coactions $\delta_x$ are $C^*$-free for each 
$x\in X$" follows immediately
from the commutativity of the diagram
\begin{displaymath} \xymatrix{
A\otimes A\ar[d]_{\pi_x\otimes\pi_x} \ar[r]^{can}       &       A\underset{\mathrm{min}}{\otimes} H\ar[d]^{\pi_x\otimes\id}  \\
A_x\otimes A_x \ar[r] &  A_x\underset{x}{\otimes} H\,. }
\end{displaymath}
Here the upper horizontal arrow is given by the  formula $a\otimes a'\mapsto (a\otimes 1)\delta(a')$, and the lower horizontal
arrow is given by $a\otimes a'\mapsto (a\otimes 1)\delta_x(a')$.

Assume now that each $\delta_x$ is $C^*$-free. Fix $\varepsilon>0$, and choose $h\in \cO(H)$.
 By Theorem \ref{theoGal}, for each $x\in X$ we can find an element 
$z_x \in (A\otimes 1)\delta(A)$ 
such that $(\pi_x\otimes_x \id)(z_x) = 1\otimes h$ in $A_x\otimes_x H$.  Consider the function
 \[
f_x: X\ni y \longmapsto \|(\pi_y\otimes_y\id)(z_x-1\otimes h)\|= \|(\pi_y\otimes_y\id)(z_x)-1\otimes h\|\in \R.
\] 
As the norm on the field $y\mapsto A_y\otimes_yH$ is upper semi-continuous, 
the function $y\mapsto f_x(y)$ is upper semi-continuous. 
Since $f_x(x)=0$, we can find an open neighborhood $U_x$ of $x$ such that for all $y\in U_x$ 
\[
f_x(y)= \|(\pi_y\otimes_y \id)(z_x)- 1\otimes h\|_{A_y\otimes_yH} <\varepsilon.
\]

Let $\{f_i\}_i$ be a partition of unity subordinate to a finite subcover $\{U_{x_i}\}_i$. An easy estimate shows 
that for $z := \sum_i (f_i\otimes 1)z_{x_i}$ and all $y\in X$ 
\[
\|(\pi_y\otimes_y \id)(z -1\otimes h)\|_{A_y\otimes_y H} < \varepsilon.
\] 
Taking the supremum over all $y$, we conclude by 
\cite[Theorem~2.4]{dg83} and the compactness of $X$ that  $\|z-1\otimes h\|< \varepsilon$. 
Hence $(A\otimes 1)\delta(A)$ 
is dense in $A\otimes H$, i.e.\ the coaction $\delta$ is $C^*$-free.
\end{proof}

Combining Theorem~\ref{theoGal} and Theorem~\ref{propBundle}, we obtain:
\begin{corollary}\label{application}
Let $X$ be a compact Hausdorff space, $A$ a unital $C(X)$-algebra, and $(H,\Delta)$ a compact quantum 
group acting on $A$ via $\delta:A\rightarrow A\mten H$. Assume that $C(X)\subseteq A^{\co H}$. Then,  the 
coaction $\delta$ satisfies
the PWG-condition
\emph{if and only if} the coactions $\delta_x$ satisfy the PWG-condition for each $x\in X$.
\end{corollary}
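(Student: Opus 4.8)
The plan is to chain together the two results just cited, using Theorem~\ref{theoGal} to convert the PWG-condition into $C^*$-freeness both globally and fibrewise, and Theorem~\ref{propBundle} to pass between global and fibrewise $C^*$-freeness. First I would observe that all the hypotheses of Theorem~\ref{propBundle} are in force and that, by Lemma~\ref{LemDelx}, each reduced coaction $\delta_x\colon A_x\to A_x\mten H$ is a genuine action of the compact quantum group $(H,\Delta)$ on the unital $C^*$-algebra $A_x=A/J_x$. This is the one point that must be verified before Theorem~\ref{theoGal} can be applied at each fibre, and it is exactly what Lemma~\ref{LemDelx} supplies: existence, injectivity, coassociativity and counitality of $\delta_x$, together with its being a unital $*$-homomorphism into the \emph{minimal} tensor product.

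Granting this, the argument is a string of equivalences. Applying Theorem~\ref{theoGal} to the global action $(A,\delta)$ shows that $\delta$ satisfies the PWG-condition if and only if $\delta$ is $C^*$-free. Applying Theorem~\ref{propBundle} then converts global $C^*$-freeness into the assertion that every fibre coaction $\delta_x$ is $C^*$-free. Finally, applying Theorem~\ref{theoGal} once more, now to each unital $C^*$-algebra $(A_x,\delta_x)$, turns $C^*$-freeness of $\delta_x$ into the PWG-condition for $\delta_x$. Concatenating these three biconditionals yields precisely the claimed equivalence: $\delta$ satisfies the PWG-condition if and only if each $\delta_x$ does.

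The hard part will be nothing in the deductive chain itself, since it is a purely formal combination of results already established; the only substantive input has been isolated in advance, namely confirming that the fibrewise objects $(A_x,\delta_x)$ meet the standing assumptions of Theorem~\ref{theoGal}. In particular one must be sure that $A_x$ is unital, which it is as a quotient of the unital algebra $A$ by the proper ideal $J_x$, and that $\delta_x$ takes values in $A_x\mten H$ so that $C^*$-freeness and the PWG-condition are defined for it in the same sense as in Theorem~\ref{theoGal}; both facts are guaranteed by Lemma~\ref{LemDelx}.
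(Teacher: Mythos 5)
Your proposal is correct and follows exactly the paper's own route: the paper derives this corollary by the one-line observation ``Combining Theorem~\ref{theoGal} and Theorem~\ref{propBundle}, we obtain,'' which is precisely your chain of equivalences (PWG $\Leftrightarrow$ $C^*$-freeness globally, then fibrewise $C^*$-freeness via Theorem~\ref{propBundle}, then back to PWG fibrewise), with Lemma~\ref{LemDelx} guaranteeing that each $\delta_x$ is a genuine action so that Theorem~\ref{theoGal} applies at every fibre.
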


As a particular case we consider: 
\begin{definition}[cf.~\cite{dhh}]\label{gjoin}
Let $(H,\Delta)$ be a compact quantum group acting on a unital \mbox{$C^*$-algebra} $A$ via 
$\delta:A\rightarrow A\otimes_{\mathrm{min}}H$.
We call the unital $C^*$-algebra
\begin{equation}
A \underset{\delta}{\circledast} H := 
\left\{f\in C\big([0,1],A \underset{\mathrm{min}}{\otimes} H\big) \,
\big|\, f(0) \in \mathbb{C}\otimes H,\; f(1)\in\delta(A)\right\}
\end{equation}
the \emph{equivariant noncommutative join} of $A$ and $H$. 
\end{definition}

The $C^*$-algebra $A\circledast_\delta H$ 
is obviously a $C(\lbrack 0,1\rbrack)$-algebra with $(A\circledast_\delta H)_x \cong A\mten H$ for 
$x\in (0,1)$, $(A\circledast_\delta H)_0 \cong H$ and $(A\circledast_\delta H)_1 \cong A$. 
We identify $A\circledast_\delta H$ as a subalgebra of 
$C(\lbrack 0,1\rbrack)\mten A\mten H$.
 The following lemma shows that $A\circledast_\delta H$ carries a natural $H$-coaction.
\vspace*{0mm}
\begin{lemma}\label{lemDef} 
The compact quantum group $(H,\Delta)$ acts on the unital $C^*$-algebra $A \circledast_\delta H$ via
\[
\delta_{A\underset{\delta}{\circledast} H}\colon A \underset{\delta}{\circledast} H\ni f\longmapsto 
(\id\otimes\id\otimes\Delta)\circ f\in 
(A \underset{\delta}{\circledast} H)\otimes_{\mathrm{min}}H.
\]
\end{lemma}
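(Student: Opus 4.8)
The plan is to verify that the proposed map $\delta_{A*H}$ is a well-defined coaction, which amounts to checking three things: that it lands in $(A*H)\otimes_{\mathrm{min}}H$, that it is an injective unital $*$-homomorphism, and that it satisfies coassociativity and counitality. First I would address well-definedness, which is the crux of the argument: given $f\in A*H$, I must show that $g:=(\id\otimes\id\otimes\Delta)\circ f$ again lies in $A*H$. The map $g$ is manifestly a continuous function $[0,1]\to A\otimes_{\mathrm{min}}H\otimes_{\mathrm{min}}H$, so to realize it as an element of $(A*H)\otimes_{\mathrm{min}}H$ I would identify the target copy of $A*H$ as sitting inside $C([0,1])\otimes_{\mathrm{min}}A\otimes_{\mathrm{min}}H$ and read $g$ as a $C([0,1],A\otimes_{\mathrm{min}}H)$-valued function tensored against the last leg $H$. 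The key is checking the two boundary conditions. At $t=0$ we have $f(0)\in\mathbb{C}\otimes H$, say $f(0)=1\otimes h$; then $g(0)=1\otimes\Delta(h)\in\mathbb{C}\otimes H\otimes H$, which is exactly the condition $g(0)\in(\mathbb{C}\otimes H)\otimes H$ required of an element of $(A*H)\otimes_{\mathrm{min}}H$ at the endpoint $0$. At $t=1$ we have $f(1)\in\delta(A)$, say $f(1)=\delta(a)$; then coassociativity of $\delta$, namely $(\delta\otimes\id)\circ\delta=(\id\otimes\Delta)\circ\delta$, gives $g(1)=(\id\otimes\Delta)(\delta(a))=(\delta\otimes\id)(\delta(a))\in(\delta\otimes\id)(A\otimes_{\mathrm{min}}H)$, which is precisely the $t=1$ boundary condition for $(A*H)\otimes_{\mathrm{min}}H$.

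Once well-definedness is secured, the remaining properties follow routinely. That $\delta_{A*H}$ is a unital $*$-homomorphism is inherited from $\Delta$ being one, applied pointwise in the $[0,1]$-variable and in the first leg $A$. Injectivity follows because $\Delta$ is injective and hence $\id\otimes\id\otimes\Delta$ is injective on the relevant completed tensor product, so its restriction to the subalgebra $A*H$ is injective as well. Coassociativity of $\delta_{A*H}$ reduces directly to coassociativity of $\Delta$: both $(\delta_{A*H}\otimes\id)\circ\delta_{A*H}$ and $(\id\otimes\Delta)\circ\delta_{A*H}$ act on $f$ by applying a triple-comultiplication to the last tensor legs, and these agree by the coassociativity diagram for $\Delta$. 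Counitality, that is $\{\delta_{A*H}(f)(1\otimes h)\}^{\mathrm{cls}}=(A*H)\otimes_{\mathrm{min}}H$, I would deduce from the two-sided cancellation (density) property of $\Delta$ built into the definition of a compact quantum group, again applied in the last legs while keeping the $[0,1]$- and $A$-variables fixed.

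I expect the main obstacle to be the $t=1$ boundary condition in the well-definedness step: one must carefully match the condition ``$g(1)\in(\delta\otimes\id)(A\otimes_{\mathrm{min}}H)$'', which defines membership in $(A*H)\otimes_{\mathrm{min}}H$ at the endpoint, against the output $(\id\otimes\Delta)(\delta(a))$ of the map, and it is exactly here that the coassociativity of the original coaction $\delta$ is used in an essential way. A subsidiary technical point is to be precise about which $C^*$-completion is meant for the iterated tensor products and to confirm that the identifications of $(A*H)\otimes_{\mathrm{min}}H$ with a subalgebra of $C([0,1])\otimes_{\mathrm{min}}A\otimes_{\mathrm{min}}H\otimes_{\mathrm{min}}H$ are compatible with the spatial tensor products, so that applying $\id\otimes\id\otimes\Delta$ to $A*H$ genuinely corestricts into the stated target. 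These are bookkeeping issues rather than genuine difficulties, and the whole argument rests on transporting the defining properties of $(H,\Delta)$ and of $\delta$ pointwise through the join construction.
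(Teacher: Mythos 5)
Your endpoint computations at $t=0$ and $t=1$ are correct (in particular, using $(\mathrm{id}\otimes\Delta)\circ\delta=(\delta\otimes\mathrm{id})\circ\delta$ at $t=1$ is the right move), but the step you dismiss as ``bookkeeping'' is precisely where the content of this lemma lies, and as written your argument has a genuine gap there. Checking the two boundary values only shows that $g:=(\mathrm{id}\otimes\mathrm{id}\otimes\Delta)\circ f$ lies in the $C^*$-algebra
\[
\mathcal{J}:=\bigl\{\,g\in C\bigl([0,1],A\otimes_{\mathrm{min}}H\otimes_{\mathrm{min}}H\bigr)\;\big|\;
g(0)\in\mathbb{C}\otimes H\otimes_{\mathrm{min}}H,\ g(1)\in\delta(A)\otimes_{\mathrm{min}}H\,\bigr\},
\]
which contains $(A*H)\otimes_{\mathrm{min}}H$ but has no a priori reason to equal it. The assertion $\mathcal{J}=(A*H)\otimes_{\mathrm{min}}H$ says that the minimal tensor product commutes with the fibre-product construction defining $A*H$; claims of this shape are false for general $C^*$-algebras, since $\otimes_{\mathrm{min}}$ is injective but not exact, so kernels and preimages of subalgebras can grow after tensoring. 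This is why the paper's proof does not argue fibrewise at all: it approximates $f\in A*H$, via a partition of unity and the density of $\mathcal{P}_H(A)$ in $A$, by sums of three elementary types of functions, checks by hand that each is mapped into $C([0,1],A\otimes_{\mathrm{min}}H)\otimes_{\mathrm{alg}}\mathcal{O}(H)$, applies slice maps $\mathrm{id}\otimes\omega$ to see that the image lies in $(A*H)\otimes_{\mathrm{alg}}H$, and concludes by continuity of $\delta_{A*H}$. The same approximation also yields counitality; your pointwise appeal to the cancellation property of $\Delta$ ``keeping the $[0,1]$- and $A$-variables fixed'' has the same defect, since the required density must be established in $(A*H)\otimes_{\mathrm{min}}H$, not fibre by fibre.

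The route you propose is salvageable, because this particular pullback does happen to commute with $\otimes_{\mathrm{min}}H$ --- but that needs an argument specific to the situation, for instance: the evaluation map $\mathrm{ev}_0\oplus\mathrm{ev}_1\colon A*H\to H\oplus\delta(A)$ stays surjective after applying $-\otimes_{\mathrm{min}}H$, so for $g\in\mathcal{J}$ there is $g'\in(A*H)\otimes_{\mathrm{min}}H$ with $g'(0)=g(0)$ and $g'(1)=g(1)$; then $g-g'$ vanishes at both endpoints, hence lies in $C_0((0,1))\otimes A\otimes_{\mathrm{min}}H\otimes_{\mathrm{min}}H$ (using the standard identification $C([0,1])\otimes_{\mathrm{min}}D\cong C([0,1],D)$ and associativity), and this ideal sits inside $(A*H)\otimes_{\mathrm{min}}H$ because $C_0((0,1))\otimes A\otimes_{\mathrm{min}}H\subseteq A*H$; therefore $g\in(A*H)\otimes_{\mathrm{min}}H$. (Alternatively, one can note that the extension of $A*H$ by $H\oplus\delta(A)$ admits the completely positive splitting $(h,\delta(a))\mapsto\bigl(t\mapsto(1-t)(1\otimes h)+t\,\delta(a)\bigr)$, and cp-split extensions remain exact under $\otimes_{\mathrm{min}}$.) With such an argument supplied, your boundary-value characterization becomes correct and your proof goes through, giving a genuinely different and arguably more structural alternative to the paper's approximation argument; without it, the central step of your proposal is an assertion rather than a proof.
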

\vspace*{-3mm}
\begin{proof} 
Note that $\delta_{A\circledast_\delta H}$ is the restriction of $(\id\otimes \id\otimes \Delta)$ to 
$A\circledast_\delta H$.  
Let us first show that the range of $\delta_H$ is contained in $(A\circledast_\delta H)\mten H$.

Consider an element $F\in A\circledast_\delta H$ as an $A\mten H$-valued function on
 $\lbrack 0,1\rbrack$. Since $F$ is 
uniformly continuous and $\cP_H(A)$ is dense in $A$ by \cite[Theorem~1.5.1]{p-p95} and
 \cite[Proposition~2.2]{s-pm11}, an elementary partition of unity argument shows that $F$ can be 
approximated by a finite sum of functions of three kinds:
\begin{enumerate}
\item
$F_1:[0,1]\ni t\mapsto \xi_0(t)(1\otimes h)\in \mathbb{C}\otimes \cO(H)$, where
$\xi_0\in C([0,1],[0,1])$, $\xi_0(1)=0$, and $h$ is a fixed element of~$\cO(H)$.
\item
$F_2:[0,1]\ni t\mapsto \xi(t)(a\otimes h)\in \cP_H(A)\otimes_{\mathrm{alg}} \cO(H)$, where
$\xi\in C([0,1],[0,1])$ with $\xi(0)=\xi(1)=0$, and $a$ and $h$ are fixed elements of $\cP_H(A)$ and $\cO(H)$ 
respectively.
\item
$F_3:[0,1]\ni t\mapsto \xi_1(t)\delta(a)\in \delta(\cP_H(A))$,  where
$\xi_1\in C([0,1],[0,1])$, $\xi_1(0)=0$, and $a$ is a fixed element of $\cP_H(A)$.
\end{enumerate}
It is clear that $\delta_{A\circledast_\delta H}(F_i) \in C(\lbrack 0,1\rbrack, A\mten H)\otimes_{\rm{alg}} 
\cO(H)$ for all $i$. Let $\omega$ be a functional on~$\cO(H)$. Then 
$(\id\otimes \omega)(\delta_{A\circledast_\delta H}(F_i)) \in A\circledast_\delta H$ for all~$i$. 
This implies that  $\delta_{A\circledast_\delta H}(F_i) \in (A\circledast_\delta H)\otimes_{\rm{alg}} H$
 for all~$i$. It follows from the continuity of $\delta_{A\circledast_\delta H}$ that 
$\delta_{A\circledast_\delta H}(F) \in (A\circledast_\delta H)\mten H$. 
Hence $\delta_{A\circledast_\delta H}$ has range in $(A\circledast_\delta H)\otimes_{\rm{min}} H$.

The coassociativity of $\delta_{A\circledast_\delta H}$ is immediate from the coassociativity of $\Delta$. 
The counitality 
condition follows from the same approximation argument as above.
\end{proof}

\begin{corollary} 
The coaction $\delta_{A\circledast_\delta H}:
A\circledast_\delta H\to (A\circledast_\delta H)\mten H$  
is $C^*$-free.
\end{corollary}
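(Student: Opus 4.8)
The plan is to apply Theorem~\ref{propBundle} to the $C([0,1])$-algebra $A*H$ equipped with the coaction $\delta_{A*H}$ constructed in Lemma~\ref{lemDef}. The hypotheses of that theorem require that the copy of $C([0,1])$ sitting inside $A*H$ lands in the coaction-invariant subalgebra. This is immediate: an element of $C([0,1])$, viewed inside $A*H$, is a scalar-valued function $\xi$ times the unit, and since $\delta_{A*H}$ acts only on the third ($H$-) leg via $\Delta$, it fixes such functions, so $C([0,1])\subseteq (A*H)^{\co H}$. Thus Theorem~\ref{propBundle} reduces the claim to verifying that the fibre coactions $(\delta_{A*H})_x$ are $C^*$-free for every $x\in[0,1]$.

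First I would identify the fibres and their induced coactions explicitly, using the evaluation maps $\pi_x$. For $x\in(0,1)$ the fibre is $(A*H)_x\cong A\mten H$, and tracing through \eqref{DefRed} together with the definition of $\delta_{A*H}$ shows that the induced coaction is $\id_A\otimes\Delta$ acting on $A\mten H$. For the endpoints, $(A*H)_0\cong H$ with induced coaction $\Delta$, and $(A*H)_1\cong A$ with induced coaction $\delta$ itself. So the three fibre coactions are $(\id_A\otimes\Delta)$ on $A\mten H$, the comultiplication $\Delta$ on $H$, and the original $\delta$ on $A$. The key point is that \emph{none} of these need be assumed free for the join to be free: the join construction is designed precisely so that freeness is forced at the endpoints by the structure of $\Delta$.

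The heart of the argument is therefore to check $C^*$-freeness of each fibre coaction directly from the definition. For the endpoint $x=0$, the coaction is $\Delta$ on $H$, and its $C^*$-freeness is exactly the right two-sided cancellation property $\{\Delta(a)(1\otimes b)\}^{\mathrm{cls}}=H\mten H$ built into the definition of a compact quantum group, after transporting via $\can(x\otimes y)=(x\otimes1)\Delta(y)$; equivalently one uses the other cancellation identity $\{(a\otimes1)\Delta(b)\}^{\mathrm{cls}}=H\mten H$, which is precisely the $C^*$-freeness condition of Definition~1.1 for $(H,\Delta)$. For the interior fibres, the coaction $\id_A\otimes\Delta$ on $A\mten H$ has the feature that the $A$-leg is acted on trivially while the $H$-leg carries $\Delta$; the density $\{(x\otimes1)(\id\otimes\Delta)(y)\}^{\mathrm{cls}}=A\mten H\mten H$ again follows by tensoring the cancellation property of $\Delta$ with the algebra $A$ and using counitality of $\delta$ to handle the $A$-factor. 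Thus every interior fibre inherits freeness from the quantum-group cancellation law, independently of whether $\delta$ on $A$ is free.

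The one fibre that is \emph{not} automatically free is $x=1$, where the coaction is the given $\delta$ on $A$, which we have made no freeness assumption about. Here the main obstacle would seem to be that we cannot conclude freeness of $\delta$. The resolution is that, in the upper-semicontinuity approximation of the proof of Theorem~\ref{propBundle}, the endpoint $x=1$ can be \emph{omitted}: I would verify that the open cover producing $z=\sum_i(f_i\otimes1)z_{x_i}$ may be chosen to avoid requiring a free fibre at $t=1$, because the single fibre at $x=0$ (whose freeness is automatic) together with the interior fibres already densely covers $[0,1)$, and the closed condition $f(1)\in\delta(A)$ is accommodated by choosing the approximants $z_{x_i}$ to be elements of $(A*H\otimes1)\delta_{A*H}(A*H)$ that are compatible with the endpoint constraint. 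Concretely, I would run the partition-of-unity estimate of Theorem~\ref{propBundle} over $[0,1]$ using only the fibre-freeness at $0$ and on $(0,1)$, noting that any given $h\in\cO(H)$ can be matched to within $\varepsilon$ uniformly in $y$, and then invoke upper semicontinuity of the field norm and compactness to deduce global density of $\{(F\otimes1)\delta_{A*H}(F')\}$ in $(A*H)\mten H$. This is where the careful bookkeeping lies, but conceptually it is a direct application of the machinery already assembled, so the corollary follows.
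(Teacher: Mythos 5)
Your identification of the fibres and the check that $C([0,1])\subseteq (A*H)^{\co H}$ agree with the paper, and the fibres at $0$ and in $(0,1)$ are indeed automatically $C^*$-free by the cancellation property. The genuine gap is your treatment of the fibre at $x=1$. Your proposed fix --- running the partition-of-unity argument of Theorem~\ref{propBundle} while ``omitting'' the endpoint --- cannot work, because fibrewise freeness is a \emph{necessary} condition, not just a sufficient one: the easy direction of Theorem~\ref{propBundle} (the commutative diagram with the quotient maps, which carry dense subspaces onto dense subspaces) shows that $C^*$-freeness of $\delta_{A*H}$ forces $C^*$-freeness of every fibre coaction, in particular of $\delta_1\cong\delta$. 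Concretely, in the approximation argument the point $y=1$ must be covered by some $U_{x_i}$, and the resulting estimate $f_{x_i}(1)<\varepsilon$ says precisely that $1\otimes h$ is norm-approximated by elements of $(A\otimes 1)\delta(A)$; for this to hold for all $h$ and $\varepsilon$ \emph{is} freeness of $\delta$. No bookkeeping can avoid this, and without a freeness hypothesis on $\delta$ the statement is false: take $A=\mathbb{C}$ with the trivial coaction $a\mapsto a\otimes 1$; then $A*H$ is the cone over $H$, its fibre at $1$ is $\mathbb{C}$ with the trivial coaction, and $\delta_{A*H}$ is not free whenever $H\neq\mathbb{C}$ (classically, the cone over $G$ has the apex as a fixed point). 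Your guiding intuition that ``the join forces freeness at the endpoints'' is valid for the Milnor join with the \emph{diagonal} action, but the coaction here is $\id\otimes\id\otimes\Delta$, acting only on the $H$-leg; classically the fibre of this join at $t=1$ is $X$ with the \emph{original} action, so freeness there is equivalent to freeness of the given action.

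The corollary must therefore be read, as the paper's own proof implicitly does, with the standing hypothesis of this section that the given action $\delta$ on $A$ is $C^*$-free: the paper's proof asserts that ``each of the above actions'', including $((A*H)_1,\delta_1)\cong(A,\delta)$, is $C^*$-free, which presupposes exactly this. With that hypothesis in place, all three fibre coactions are free --- $(H,\Delta)$ and $(A\mten H,\id\otimes\Delta)$ by the cancellation property, and $(A,\delta)$ by assumption --- and Theorem~\ref{propBundle} applies verbatim; that is the entire proof, with no need for any separate approximation argument.
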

\vspace*{-8mm}
\begin{proof} The $C^*$-algebra $A\circledast_\delta H$ is a unital $C(\lbrack 0,1\rbrack)$-algebra 
with $C(\lbrack 0,1\rbrack) \in (A\circledast_\delta H)^{\co H}$. 
With the notation of Lemma~\ref{LemDelx}, we have: 
\vspace*{-2mm}
\begin{enumerate}
\item $((A\circledast_\delta H)_0,\delta_0) \cong (H,\Delta)$,
\item $((A\circledast_\delta H)_x,\delta_x) \cong (A\mten H,\id\otimes \Delta)$ for $x\in (0,1)$,
\item $((A\circledast_\delta H)_1,\delta_1)\cong (A,\delta)$.
\end{enumerate}
\vspace*{-2mm}
As each of the above actions are $C^*$-free, we infer from Theorem~\ref{propBundle} that 
$\delta_{A\circledast_\delta H}$ is $C^*$-free. 
Alternatively, one can use a direct 
approximation argument as in Lemma~\ref{lemDef}.
\end{proof}

\appendix

\renewcommand{\thesection}{A}

\section*{Appendix: Finite Galois coverings}
\setcounter{section}{0}

Let $\pi\colon X\rightarrow Y$ be a \emph{covering map} of topological spaces. As usual, this means that 
given any $y\in Y$ there exists an
open set $U$ in $Y$ with $y\in U$ such that $\pi^{-1}(U)$ is a disjoint union of open sets each of which $\pi$ 
maps homeomorphically
onto~$U$. A \emph{deck transformation} is a homeomorphism $h\colon X\rightarrow X$ with $\pi\circ h=\pi$.
\vspace*{1mm}
\begin{proposition}\label{propo}
Let $X$ and $Y$ be compact Hausdorff topological spaces. Let $\pi\colon
X\rightarrow Y$ be a covering map, and let $\Gamma$ be
the group of deck transformations of this covering. Assume that $\Gamma$ is finite. Then
 $X$ is a principal $\Gamma$-bundle over $Y$ if and only if the canonical map
\begin{align*}
can \colon C(X){\underset{C(Y)}{\otimes}}C(X)&\longrightarrow C(X)\otimes C(\Gamma) \\
can \colon f_1\otimes f_2&\longmapsto (f_1\otimes 1)\delta(f_2)
\end{align*}
is an isomorphism. Here $\delta$ is given by~\eqref{dualco}.
\end{proposition}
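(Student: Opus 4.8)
The plan is to prove both implications directly, working with the finite covering structure. For a finite covering $\pi\colon X\to Y$ of compact Hausdorff spaces with finite deck-transformation group $\Gamma$, the key observation is that the action of $\Gamma$ on $X$ is free if and only if $X$ is a principal $\Gamma$-bundle over $Y$; indeed, local triviality of the covering already provides local product structure, so the only extra content of principality is freeness of the $\Gamma$-action (together with $Y = X/\Gamma$). Thus I would first reduce the statement to: \emph{the $\Gamma$-action on $X$ is free if and only if $\can$ is an isomorphism}. Since $\Gamma$ is a finite group acting on the compact Hausdorff space $X$, this is exactly the content of Theorem~\ref{classical} specialized to $G=\Gamma$, because $C(\Gamma) = \map(\Gamma,\C)$ is finite-dimensional and cosemisimple, with $\cO(C(\Gamma)) = C(\Gamma)$ itself and $\P_{C(\Gamma)}(C(X)) = C(X)$.

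More precisely, I would argue as follows. Because $\Gamma$ is finite, every representation is already finite-dimensional, the Hopf algebra $\cO(C(\Gamma))$ equals all of $C(\Gamma)$, and the Peter-Weyl subalgebra $\P_{C(\Gamma)}(C(X))$ equals $C(X)$ in its entirety (every continuous function on $X$ is a matrix coefficient for the finite group action, so the coaction $\delta$ already lands in $C(X)\otimes C(\Gamma)$, the algebraic tensor product, which coincides with the minimal one by finite-dimensionality of $C(\Gamma)$). Consequently the canonical map in the statement is \emph{literally} the map \eqref{canclas} of Theorem~\ref{classical}, with $B = C(X/\Gamma) = C(Y)$. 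Invoking Theorem~\ref{classical}, bijectivity of $\can$ is equivalent to freeness of the $\Gamma$-action on $X$.

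It then remains to connect freeness of the $\Gamma$-action with $X$ being a principal $\Gamma$-bundle. For the forward direction, a principal $\Gamma$-bundle has, by definition, a free action, so $\can$ is an isomorphism by Theorem~\ref{classical}. For the converse, suppose $\can$ is an isomorphism; by Theorem~\ref{classical} the $\Gamma$-action is free, and since $\pi\colon X\to Y$ is a covering with deck group $\Gamma$, the fibres of $\pi$ are exactly the $\Gamma$-orbits and $Y \cong X/\Gamma$; a free action of a finite group on a compact Hausdorff space that is a covering with that deck group yields precisely a principal $\Gamma$-bundle.

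The main obstacle I anticipate is verifying cleanly that the hypotheses of the covering genuinely force $Y = X/\Gamma$ and that the $\Gamma$-orbits coincide with the fibres of $\pi$, since a priori the deck group need not act transitively on fibres (the covering need not be regular). I would handle this by noting that freeness plus the covering property implies each orbit is contained in a fibre, and that principality is equivalent to this inclusion being an equality on all fibres; in fact, the bijectivity of $\can$ over $C(Y)$ (not merely over $C(X/\Gamma)$) is exactly what encodes that $Y$, rather than $X/\Gamma$, is the correct base, so the statement that $\can$ being an isomorphism is equivalent to principality is really the assertion that the canonical map detects both freeness and the surjectivity of the orbit map onto the fibre. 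This final point — that the algebraic canonical map over $C(Y)$ sees regularity of the covering — is the substantive part and is precisely the well-known equivalence recalled in the introduction.
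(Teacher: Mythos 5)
Your forward direction is fine: for finite $\Gamma$ one indeed has $\cO(C(\Gamma))=C(\Gamma)$ and $\mathcal{P}_{C(\Gamma)}(C(X))=C(X)$, and for a principal bundle $Y=X/\Gamma$, so Theorem~\ref{classical} (or, as the paper does, the density condition \eqref{classcls} plus Schneider's theorem) gives bijectivity of $\can$. The gap is in the converse, and you put your finger on it yourself without closing it. Theorem~\ref{classical} characterizes freeness by bijectivity of the canonical map over $B=C(X)^{\co C(\Gamma)}=C(X/\Gamma)$, whereas the map in the Proposition is taken over $C(Y)$; these agree only when the covering is regular, i.e.\ $Y=X/\Gamma$, which is precisely part of what has to be proved. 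Your opening ``key observation'' --- that freeness alone is equivalent to principality over $Y$ --- is false: in the paper's own example of the three-fold cover of the wedge of two circles, the deck group is trivial, hence acts freely, yet the covering is not principal (and the canonical map there is surjective but not injective). Your final paragraph, where you correctly identify that the substantive point is that bijectivity of $\can$ over $C(Y)$ must ``see regularity,'' resolves it by appealing to ``the well-known equivalence recalled in the introduction'' --- but that well-known equivalence \emph{is} the statement of this Proposition, which the appendix exists to prove. So the argument is circular at its only substantive step.

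What is missing is a concrete mechanism extracting $C(Y)=C(X/\Gamma)$ from injectivity of $\can$. The paper supplies it with the fibrewise averaging operator $\Theta\colon C(X)\to C(Y)$ defined by $(\Theta(f))(y)=\frac{1}{\#\pi^{-1}(y)}\sum_{x\in\pi^{-1}(y)}f(x)$, which is well defined and continuous by local triviality of the covering (the fibres being finite by compactness of $X$), and is a unital $C(Y)$-linear map. Given $f\in C(X/\Gamma)$, one has $\can(f\otimes 1)=f\otimes 1=\can(1\otimes f)$, so injectivity of $\can$ forces $f\otimes 1=1\otimes f$ in $C(X)\otimes_{C(Y)}C(X)$; applying $\Theta\otimes\id$ yields $f=\Theta(f)\in C(Y)$. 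Hence $C(X/\Gamma)=C(Y)$, i.e.\ the fibres of $\pi$ are exactly the $\Gamma$-orbits, and freeness of the $\Gamma$-action then follows from surjectivity of $\can$ via the density condition \eqref{classcls}. With an argument of this kind (the paper cites \cite[Lemma~1.7]{dhs99} and unravels it in the subsequent remark) your outline closes; without it, it does not.
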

\vspace*{-5mm}
\begin{proof}
If $X$ is a principal $\Gamma$-bundle over $Y$, then $C(Y)=C(X/\Gamma)=C(X)^{co C(\Gamma)}$ and, by 
\eqref{classcls},
$can$ is surjective. Furthermore, since $C(\Gamma)$ is cosemisimple, 
by the result of H.-J.~Schneider \cite[Theorem~I]{s-hj90}, the surjectivity of $can$ implies its bijectivity.

Assume now that $can$ is bijective.
The local triviality assumption in the definition of a covering map implies that for any continuous function $f$ on 
$X$ one has
a continuous function $\Theta(f)$ on $Y$ given by the formula
\[
(\Theta(f))(y):=\frac{1}{\# \pi^{-1}(y)}\sum_{x\in\pi^{-1}(y)}f(x)\,.
\]
Note that the fibres are finite due to the compactness of $X$. Also, 
one immediately sees that $\Theta$ is a unital $C(Y)$-linear map from $C(X)$ to $C(Y)$. Now it follows from the bijectivity of $can$
and \cite[Lemma~1.7]{dhs99} that $C(Y)=C(X)^{co C(\Gamma)}=C(X/\Gamma)$. Hence the fibres of the covering 
$\pi\colon X\rightarrow Y$ are the orbits of~$\Gamma$. Finally, the freeness of the action of $\Gamma$ on $X$ follows
from the surjectivity of $can$ and~\eqref{classcls}.
\end{proof}

If $X$ is connected, then it is always the case that
the group of deck transformations $\Gamma$ is finite and that the action of $\Gamma$
on $X$ is free. The issue is then whether or not the action of $\Gamma$ on each fiber of $\pi$ is transitive. Thus we conclude
from Proposition~\ref{propo}:
\begin{corollary}
Let $X$ and $Y$ be connected compact Hausdorff topological spaces, and  let $\pi\colon X\rightarrow Y$ be a
covering map.  Denote by $\Gamma$ the group of deck transformations. Then the action of $\Gamma$ on 
each fiber of $\pi$
is transitive if and only if the canonical map
\[
can \colon C(X){\underset{C(Y)}{\otimes}}C(X)\longrightarrow C(X)\otimes C(\Gamma)
\]
is an isomorphism.
\end{corollary}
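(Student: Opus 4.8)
The plan is to deduce the Corollary directly from Proposition~\ref{propo} by recognizing that, under the connectedness hypotheses, the principal-bundle condition of that proposition is exactly fibrewise transitivity of the $\Gamma$-action. Since Proposition~\ref{propo} already equates ``$X$ is a principal $\Gamma$-bundle over $Y$'' with bijectivity of $can$, the whole task reduces to matching its left-hand condition with the one appearing here.

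First I would record the two standard facts quoted in the paragraph preceding the statement. Because $X$ is connected and $\pi$ is a local homeomorphism of Hausdorff spaces, any two deck transformations agreeing at a single point agree everywhere, since their agreement locus is both open (local injectivity of $\pi$) and closed (Hausdorffness); hence a deck transformation is determined by its value at one point. Evaluating at a base point $x_0$ with $\pi(x_0)=y_0$ then embeds $\Gamma$ into the fibre $\pi^{-1}(y_0)$, which is finite by compactness and discreteness, so $\Gamma$ is finite; and the same rigidity forces any deck transformation fixing $x_0$ to be the identity, so $\Gamma$ acts freely. This supplies precisely the finiteness hypothesis needed to invoke Proposition~\ref{propo}.

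Next I would turn the free finite action into a bundle. A free action of a finite group on a compact Hausdorff space is properly discontinuous: each $x$ admits a neighbourhood $U$ with $gU\cap U=\emptyset$ for $g\neq e$, so the orbit map $q\colon X\to X/\Gamma$ is itself a principal $\Gamma$-bundle. As every deck transformation commutes with $\pi$, the map $\pi$ factors as $\pi=\bar\pi\circ q$ through a continuous surjection $\bar\pi\colon X/\Gamma\to Y$. Since $X/\Gamma$ is compact and $Y$ is Hausdorff, $\bar\pi$ is a homeomorphism if and only if it is injective, that is, if and only if each fibre of $\pi$ is a single $\Gamma$-orbit. Thus $X$ is a principal $\Gamma$-bundle over $Y$ exactly when $\Gamma$ acts transitively on every fibre, and composing this equivalence with Proposition~\ref{propo} yields the Corollary.

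The only delicate point is the bookkeeping in this last step: one must confirm that ``principal $\Gamma$-bundle over $Y$'' in the sense used by Proposition~\ref{propo} coincides with ``$Y\cong X/\Gamma$ compatibly with $\pi$'', and that $\bar\pi$ is genuinely a homeomorphism precisely under transitivity. Both checks are routine once $X\to X/\Gamma$ is identified as the bundle, and no new functional-analytic or Hopf-algebraic input is required, since the cosemisimplicity and Schneider-type arguments have already been absorbed into Proposition~\ref{propo}.
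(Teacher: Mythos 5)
Your proof is correct and takes essentially the same approach as the paper: the paper likewise notes that connectedness of $X$ forces $\Gamma$ to be finite and to act freely, so that ``$X$ is a principal $\Gamma$-bundle over $Y$'' reduces to transitivity of $\Gamma$ on the fibers of $\pi$, and then concludes by Proposition~\ref{propo}. The only difference is that you spell out the standard covering-space facts (clopen agreement locus, finiteness and freeness of $\Gamma$, proper discontinuity, and $X/\Gamma\cong Y$ under transitivity) which the paper asserts without proof.
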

\vspace*{0mm}

\begin{remark}{\rm
To make the proof of Proposition~\ref{propo} more self-contained, let us unravel the crux of the argument 
proving \cite[Lemma~1.7]{dhs99}.
We know that $C(Y)\subseteq C(X/\Gamma)$, and we need to prove the equality. To this end, let us take any 
$f\in C(X/\Gamma)$.
Then, since $can (1\otimes f)=can(f\otimes 1)$, it follows from the bijectivity of $can$ that 
$1\otimes f=f\otimes 1\in C(X)\otimes_{C(Y)}C(X)$. Applying $\Theta\ot \id$ to this equality yields 
$f=\Theta(f)\in C(Y)$.
}\end{remark}
\vspace*{0mm}

\begin{remark}{\rm
An alternative proof of Proposition~\ref{propo} is as follows. Consider the commutative diagram
\begin{displaymath} \xymatrix{
C(X){\underset{C(Y)}{\otimes}}C(X)\ar[d] \ar[r]^{can}       &       C(X)\otimes C(\Gamma)\ar[d]  \\
C(X{\underset{Y}{\times}}X) \ar[r] &  C(X\times\Gamma) }
\end{displaymath}
in which each vertical arrow is the evident map and the lower horizontal arrow is the \mbox{$\ast$-ho}momor- phism
resulting from the map of topological spaces
\[
X\times\Gamma\longrightarrow X\underset{Y}{\times}X,\qquad (x, \gamma)\mapsto (x, x\gamma).
\]
Note that $X$ is a (locally trivial) principal $\Gamma$ bundle on $Y$ if and only if this map of topological spaces is
a homeomorphism --- which is equivalent to bijectivity of the lower horizontal arrow.

Hence to prove Proposition~\ref{propo}, it will suffice to prove that the two vertical arrows are isomorphisms.
The right vertical arrow is an isomorphism because $\Gamma$ is a finite group, so $C(\Gamma)$ is a
finite dimensional vector space over the complex numbers $\mathbb{C}$.

For the left vertical arrow, let $E$ be the vector bundle on $Y$ whose fiber at $y\in Y$ is Map($\pi^{-1}(y), \mathbb{C}$),
i.e.\ is the set of all set-theoretic maps from $\pi^{-1}(y)$ to $\mathbb{C}$. As $\pi^{-1}(y)$ is a discrete subset of the
compact Hausdorff space $X$, it is finite.
Let $\mathcal{S}(E)$ be the algebra consisting of all the continuous
sections of $E$. Then $\mathcal{S}(E)=C(X)$.

Similarly, define
\[
\pi^{(2)}\colon X{\underset{Y}{\times}}X \longrightarrow Y\quad\text{by}\quad
\pi^{(2)}\colon (x_1, x_2)\longmapsto \pi(x_1)=\pi(x_2).
\]
Let $F$ be the vector bundle on $Y$ whose fiber at $y\in Y$ is Map($(\pi^{(2)})^{-1}(y), \mathbb{C}$), i.e.\ 
is the set of all
set-theoretic maps from $(\pi^{(2)})^{-1}(y)$ to $\mathbb{C}$. Then 
$\mathcal{S}(F) =C( X\times_YX)$,
where $\mathcal{S}(F)$ is the algebra consisting of all the continuous sections of~$F$.
Since $F=E\otimes E$ as vector bundles on~$Y$,
we conclude $\mathcal{S}(F)=\mathcal{S}(E)\otimes_{C(Y)}\mathcal{S}(E)$, which proves bijectivity for the 
left vertical arrow. 
}\end{remark}
\vspace*{0mm}

\begin{example}{\rm
Without connectivity,  the group of deck transformations can be infinite. For example, let $Y$ be the
Cantor set and let 
$\pi\colon Y\times\{0, 1\}\rightarrow Y$ be the trivial twofold covering.
Let $U$ be a subset of $Y$ which is both open and closed. 
Define $\gamma_U\colon Y\times\{0, 1\}\rightarrow Y\times\{0, 1\}$ by 
\[ 
\gamma_U(y, t) = \left \{
                                                 \begin{array}{ll}
                                                 (y, t) & \text{for }\mbox{$y\notin U$}\\

                                                                      &              \\

                                                 (y, 1-t) & \text{for }\mbox{$y\in U$. }
                                                 \end{array}
                                                 \right. 
\]
Then $\gamma_U$ is a deck transformation and there are infinitely many~$\gamma_U$.
}\end{example}
\vspace*{0mm}

\begin{example}{\rm 
The following example is a three-fold cover $X$ of the one-point union of two circles~$Y$. Here the pre-image 
of the left circle
of the base space is the usual three-fold covering of the circle. The pre-image of the right circle of the base 
space is the
disjoint union of the usual two-fold covering of the circle and the one-fold covering of the circle.
\begin{center}
\begin{tikzpicture}[x=2em, y=2em, >=stealth, auto, node distance=1,scale=1]
\tikzset{
  on each segment/.style={
    decorate,
    decoration={
      show path construction,
      moveto code={},
      lineto code={
        \path [#1]
        (\tikzinputsegmentfirst) -- (\tikzinputsegmentlast);
      },
      curveto code={
        \path [#1] (\tikzinputsegmentfirst)
        .. controls
        (\tikzinputsegmentsupporta) and (\tikzinputsegmentsupportb)
        ..
        (\tikzinputsegmentlast);
      },
      closepath code={
        \path [#1]
        (\tikzinputsegmentfirst) -- (\tikzinputsegmentlast);
      },
    },
  },
  mid arrow/.style={postaction={decorate,decoration={
        markings,
        mark=at position .5 with {\arrow[#1, ultra thick]{>}}
      }}},
  end arrow/.style={postaction={decorate,decoration={
        markings,
        mark=at position .99 with {\arrow[#1, ultra thick]{>}}
      }}},
}

\tikzstyle{every circle} = [radius=0.1, fill=black]
\tikzstyle{every node} = [font=\tiny]

\fill (0, 2) circle node {};
\fill (0, 0) circle node {};
\fill (0, -2) circle node {};

\draw (4, 0) node {$X$};

\path	
   (0,-2)edge	[bend left=45, end arrow]	(-2, 0)
   (-2,0)edge	[bend left=45]	(0, 2)
	(0, 2)	edge	[bend right=90, end arrow]	(2, 2)
	(2, 2)	edge	[bend right=90]	(0, 2)
	(0, 2)	edge	[bend right=45, end arrow]	(-1, 1)
  (-1, 1)edge	[bend right=45]	(0, 0)
	(0, 0)	edge	[bend left=65, end arrow]	(2,-1) 
	(2,-1)	edge	[bend left=65]	(0,-2)
	(0,-2)	edge	[bend right=45, end arrow]	(1,-1)
	(1,-1)	edge	[bend right=45]	(0,0)
	(0, 0)	edge	[bend right=45, end arrow]	(-1, -1)
 (-1, -1)edge	[bend right=45]	(0, -2)
;

\end{tikzpicture}

\begin{tikzpicture}[x=2em, y=2em, >=stealth, auto, node distance=1]
\tikzstyle{every circle} = [radius=0.1, fill=black]
\tikzstyle{every node} = [ font=\tiny]

\fill (0, 0) circle node {};

\draw (4, 0) node {$Y$};

\path	(-2,0)	edge	[bend left=90]	(0,0)
	(0,0)	edge	[bend right=90]	(2,0)
	(2,0)	edge	[bend right=90]	(0,0)
	(0,0)	edge	[bend left=90]	(-2,0);
\end{tikzpicture}
\end{center}
In this example,   the group of deck transformations is trivial. Indeed,
let $\gamma$ be a deck transformation. Consider $\gamma$ restricted to the pre-image of the right circle of 
the base space.
This pre-image has two connected components. Since $\gamma$  is a deck transformation of this pre-image, 
it must map
each  connected compenent to itself. 
This implies that $\gamma$ has a fixed point. Hence, as $X$ is connected, $\gamma=\mathrm{id}$.
In particular, this shows that the group of deck transformations
need not act transitively on fibers of a covering.
The canonical map is surjective but not injective.
}\end{example}

\noindent{\bf Acknowledgments.}
We thank Wojciech Szyma\'nski and Makoto Yamashita
for helpful and enlightening discussions. We are also very grateful to Jakub Szczepanik for assistance with 
\LaTeX graphics.
This work was partially supported by NCN grant 
2011/01/B/ST1/06474. Paul F.\ Baum was partially supported by NSF grant DMS 0701184,
and Kenny De Commer was partially supported by FWO grant G.0251.15N.


\begin{thebibliography}{\normalsize HKMZ11}
\normalsize

\bibitem[AGV72]{agv72} Artin M., Grothendieck A.~and Verdier J.-L., 
\emph{Th\'{e}orie des Topos et Cohomologie \'{e}tale des Sch\'{e}mas.} 
 Springer Lecture Notes in Math.2 69 (1972)\vspace*{1mm}

\bibitem[AH59]{ah59} Atiyah M.~F.~and Hirzebruch F.,
Riemann-Roch theorems for differentiable manifolds. 
\emph{Bull. Amer. Math. Soc.} \textbf{65} (1959) 276--281.\vspace*{1mm}

\bibitem[AH61]{ah61} Atiyah M.~F.~and Hirzebruch F.,
Vector bundles and homogeneous spaces. 
1961 Proc. Sympos. Pure Math., Vol. III pp. 7–-38 
American Mathematical Society, Providence, R.I.\vspace*{1mm}

\bibitem[BH14]{bh14} Baum P.F., Hajac P.M.,
Local proof of algebraic characterization of free actions.
\emph{SIGMA} \textbf{10} (2014), 060, 7 pages. \vspace*{1mm}


\bibitem[BB08]{bb08} Beggs E.~J.~and Brzezi\'nski T.,
 An explicit formula for a strong connection. 
\emph{Applied Categorical Structures} \textbf{16}  57--63.\vspace*{1mm}

\bibitem[BH04]{bh04}Brzezi\'nski T.~and Hajac P.~M., 
The Chern-Galois character.
\emph{C. R. Acad. Sci. Paris, Ser. I} \textbf{338} (2004) 113--116.\vspace*{1mm}

\bibitem[BH]{bh} Brzezi\'nski T.~and Hajac P.~M., 
Galois-type extensions and equivariant projectivity. 
In: {\em Quantum Symmetry in Noncommutative Geometry},  
P.~M.~Hajac (ed.),  Eur. Math. Soc. Publ. House, to appear.\vspace*{1mm}

\bibitem[C-A94]{c-a94}  Connes A.,
\emph{Noncommutative Geometry},  
Academic Press, San Diego, CA, 1994, 661 pages, ISBN 0-12-185860-X. 
\vspace*{1mm}

\bibitem[DHH]{dhh} D\k abrowski L., Hadfield T.~and Hajac P.~M.,
Equivariant join and fusion of noncommutative algebras.
 arXiv:1407.6020\vspace*{1mm} 

\bibitem[DHS99]{dhs99} D\k abrowski L., Hajac P.~M.~and Siniscalco P.,
Explicit Hopf-Galois description of 
$SL_{e^{\frac{2i\pi}{3}}}(2)$-induced Frobenius homomorphisms.
\emph{Enlarged Proceedings of the ISI GUCCIA Workshop on quantum
groups, non commutative geometry and fundamental physical interactions}, 
D.~Kastler, M.~Rosso, T.~Schucker (eds.),
Commack -- New York, Nova Science Pub, Inc., pp.279--298, 1999.\vspace*{1mm}

\bibitem[DY12]{dy12}  De Commer K.~and Yamashita M., 
A construction of finite index $C^*$-algebra inclusions from 
free actions of compact quantum groups.  
\emph{Publ. Res. Inst. Math. Sci.} \textbf{49} (2013) 709--735.\vspace*{1mm}

\bibitem[DG70]{dg70} Demazure M.~and  Gabriel P., 
Groupes Alg\'{e}briques I. 
North Holland, Amsterdam (1970).\vspace*{1mm}

\bibitem[DG83]{dg83} Dupr\'{e} M.~J.~and Gillette R.~M., 
Banach bundles, Banach modules and automorphisms of $C^*$-algebras. 
\textit{Research Notes in Math.} \textbf{92} London: Pitman (1983).\vspace*{1mm}

\bibitem[E-DA00]{e-da00} Ellwood D.~A., 
A new characterisation of principal actions. 
\emph{J. Funct. Anal.} \textbf{173} (2000) 49--60.\vspace*{1mm}

\bibitem[H-PM96]{h-pm96} Hajac P.~M.,
Strong connections on quantum principal bundles. 
\emph{Comm. Math. Phys.} \textbf{182} (1996) 579--617.\vspace*{1mm}

\bibitem[HKMZ11]{hkmz11} Hajac P.~M., Kr\"{a}hmer U., Matthes R.~and Zieli\'{n}ski B., 
Piecewise principal comodule algebras. 
\emph{J. Noncommut. Geom.} \textbf{5} (2011) 591--614.\vspace*{1mm}

\bibitem[HMS03]{hms03} Hajac P.~M.,  Matthes R.~and Szyma\'{n}ski W., 
Chern numbers for two families of noncommutative Hopf fibrations. 
\emph{C. R. Acad. Sci. Paris, Ser. I} \textbf{336} (2003) \mbox{925--930}.\vspace*{1mm}

\bibitem[K-G88]{k-g88} Kasparov G.~G., 
Equivariant KK-theory and the Novikov conjecture.
\emph{Inv. math.} \textbf{91} (1988) 147--201.\vspace*{1mm}

\bibitem[L-EC95]{l-ec95}  Lance E.~C.,
Hilbert $C^*$-modules. A toolkit for operator algebraists.
\emph{London Math. Soc. Lecture Note Series} \textbf{210} Cambridge Univ. Press, Cambridge (1995).
\vspace*{1mm}

\bibitem[MV98]{mv98} Maes A.~and Van Daele A.,
Notes on compact quantum groups.  
\emph{Nieuw Arch. Wisk.}   \textbf{16}  (1998) 73--12.\vspace*{1mm}


\bibitem[P-P95]{p-p95} Podle\'{s}  P., 
Symmetries of quantum spaces. Subgroups and quotient spaces of quantum $SU(2)$ and $SO(3)$ groups. 
\emph{Comm. Math. Phys.} \textbf{170} (1995) 1--20.\vspace*{1mm}

\bibitem[R-MA89]{r-ma89} Rieffel M.~A., 
Continuous fields of $C^*$-algebras coming from group cocycles and actions.
\emph{Math. Ann.} \textbf{283} 631--643.\vspace*{1mm}

\bibitem[S-P98]{s-p98}  Schauenburg P., 
Bialgebras over noncommutative rings and a structure theorem for Hopf bimodules.
\textit{Applied Categorical Structures} \textbf{6} (1998) 193--222.\vspace*{1mm}

\bibitem[S-P04]{s-p04} Schauenburg P., 
Hopf-Galois and Bi-Galois extensions.
\textit{Galois theory, Hopf algebras, and semiabelian categories,
Fields Inst. Comm.} \textbf{43} AMS (2004) 469--515.\vspace*{1mm}

\bibitem[SS05]{ss05} Schauenburg P.\ and Schneider H.-J., 
On generalized Hopf Galois extensions.
\textit{J. Pure Appl. Algebra} \textbf{202} (2005) 168-–194.\vspace*{1mm}

\bibitem[S-HJ90]{s-hj90} Schneider  H.-J. , 
Principal homogeneous spaces for arbitrary Hopf algebras.
\textit{Israel J. of Math.} \textbf{72} (1990) 167–-195.\vspace*{1mm}

\bibitem[S-PM11]{s-pm11} So\l tan P.~M.,
 On actions of compact quantum groups.
\emph{Illinois Journal of Mathematics} \textbf{55}  (2011) 953--962.\vspace*{1mm}

\bibitem[S-R62]{s-r62} Swan R.~G., 
Vector bundles and projective modules.
\emph{Trans. Amer. Math. Soc.} \textbf{105} (1962) 264--277.\vspace*{1mm}

\bibitem[U-KH89]{u-kh89} Ulbrich K.-H.,
Fibre functors of finite-dimensional comodules.
\emph{Manuscripta Math.} \textbf{65} (1989) 39--46. \vspace*{1mm}

\bibitem[W-SL87]{w-sl87} Woronowicz S.~L.,
Compact matrix pseudogroups.  
\emph{Comm. Math. Phys.}  \textbf{111}  (1987) 613--665.\vspace*{1mm}

\bibitem[W-SL98]{w-sl98} Woronowicz S.~L.,
Compact quantum groups.  
\emph{Sym\'{e}tries quantiques (Les Houches, 1995)},  845--884, North-Holland, Amsterdam, 1998.

\end{thebibliography}
\end{document}